\def\O{\Omega}
\def\di{\displaystyle}
\def\R{{\mathbb {R}}}
\def\A{{\mathcal{A}}}
\def\ldos{L^2(\Omega)}
\def\lp{L^{p(\cdot)}(\Omega)}
\def\lpe{L^{p^*(\cdot)}(\Omega)}
\def\wp{W^{1,p(\cdot)}(\Omega)}
\def\wpc{W_0^{1,p(\cdot)}(\Omega)}
\def\ldosb{L^{2}(\partial\Omega)}
\def\ldosg{L^{2}(\Gamma_{int})}
\def\wpc{W_0^{1,p(\cdot)}(\Omega)}
\def\th{\mathcal{T}_h}
\def\eh{\mathcal{E}_h}
\def\wpt{W^{1,p(\cdot)}(\th)}
\def\gi{\Gamma_{int}}
\def\lpi{L^{p(\cdot)}(\Gamma_{int})}
\def\l2i{L^{2}(\Gamma_{int})}
\def\[{[\hspace{-0.05cm}[}
\def\]{]\hspace{-0.05cm}]}
\def\l2{\rceil\!|}
\def\r2{|\!\lceil}
\newtheorem*{teo*}{Theorem}
\newtheorem{teo}{Theorem}[section]
\newtheorem{lema}[teo]{Lemma} 
\newtheorem{prop}[teo]{Proposition}
\newtheorem{hyp}[teo]{Hypothesis}
\theoremstyle{definition}
\newtheorem{defi}[teo]{Definition}
\theoremstyle{remark}
\newtheorem{remark}[teo]{Remark}
\renewcommand{\div}{\operatorname{div}}
\DeclareMathOperator*{\dom}{dom}
\DeclareMathOperator*{\diam}{diam}
\DeclareMathOperator*{\esssup}{ess\,sup}
\DeclareMathOperator*{\essinf}{ess\,inf}
\begin{document}

\title{The decomposition-coordination method for the $p(x)$-Laplacian}

\author[L. M. Del Pezzo  \& S. Mart\'{i}nez ]
{Leandro Del Pezzo and Sandra Mart\'{\i}nez}

\address{Departamento  de Matem\'atica, FCEyN
\hfill\break\indent UBA (1428) Buenos Aires, Argentina.}
\email{{ ldpezzo@dm.uba.ar 
\\smartin@dm.uba.ar}
 }

\thanks{Supported by ANPCyT PICT 2006-290, UBA X078, UBA X117 . The authors are members of CONICET}

\keywords{variable exponent spaces, decomposition coordination methods, discontinuous Galerkin.\\
\indent 2010 {\it Mathematics Subject Classification.} 65k10, 35J20, 65N30 and 46E35}
%%%%%%%%%%%%%%%%%%%%%%%%%%%%%%%%%%%%%%%%%%%%%%%%%%%%%%%%%%%%
%%%%%%%%%%%%%%%%%%%%%%%%%%%%%%%%%%%%%%%%%%%%%%%%%%%%%%%%%%%%
\begin{abstract}
	In this paper we construct two algorithms to 
	approximate the minimizer 
	of a discrete functional which comes from using 
	a discontinuous Galerkin method for a variational
	problem related to the $p(x)$-Laplacian. 
	%The function
	%$p:\Omega \to [p_1, p_2]$  is log-Holder continuous, 
	%$1 < p_1\leq p_2 \leq  2$ and $\Omega \subset \R^N$.
	We also make some numerical experiments in dimension
	two.
	%This example is motivated by its applications to image
	%processing.
\end{abstract}
%%%%%%%%%%%%%%%%%%%%%%%%%%%%%%%%%%%%%%%%%%%%%%%%%%%%%%%%%%%%
%%%%%%%%%%%%%%%%%%%%%%%%%%%%%%%%%%%%%%%%%%%%%%%%%%%%%%%%%%%%
\maketitle
%%%%%%%%%%%%%%%%%%%%%%%%%%%%%%%%%%%%%%%%%%%%%%%%%%%%%%%%%%%%
%%%%%%%%%%%%%%%%%%%%%%%%%%%%%%%%%%%%%%%%%%%%%%%%%%%%%%%%%%%%
\section{Introduction}

This work is devoted to developing and analysing two 
algorithms to approximate 
the minimizer of a discrete functional 
which comes from using a discontinuous Galerkin method for a  nonlinear, nonhomogeneous variational 
problem. This  variational problem is related to an image processing model of Chen, Levin 
and Rao \cite{CLR}, see also \cite{BCE}.

More precisely, we consider the following nonlinear variational problem: 
\begin{equation}
	\begin{aligned} 	
		\mbox{Find } u\in\A\coloneqq&\left\{v\in\wp\colon v-u_D\in\wpc\right\}\mbox{ such that }\\
		&J(u)= \min_{v\in\A}J(v),
	\end{aligned}
	\tag{P}\label{problema}
 \end{equation}
where
$$
J(v)\coloneqq\int_{\O} |\nabla v|^{p(x)}+|v-\xi|^{2} \, dx,
$$
$\O$ is a bounded connected open set in $\R^N$ 
with Lipschitz continuous boundary, $p:\overline{\O}\to 
[p_1,p_2]$ is a $\log$-H\"{o}lder continuous function with 
$1<p_1\le p_2\leq 2,$ $u_D\in\wpc$ and $\xi\in L^2(\O).$  It 
is well-known that the functional $J$ admits a unique 
minimizer 
$u\in\A.$ For the definitions of the $\log$-H\"{o}lder 
continuous function and the variable exponent
Sobolev spaces $\wp$ and $\wpc,$ see Section 
\ref{Preliminaries}.

\medskip

Note that this functional is related to the so-called 
$p(x)-$Laplacian operator, that is
\begin{equation*}
\Delta_{p(x)}=\div(|\nabla u|^{p(x)-2}\nabla u).
\end{equation*}
This operator extends the classical Laplacian 
($p(x)\equiv2$) and the
$p-$Laplacian ($p(x)\equiv p,$ $1< p<+\infty$). The interest 
in this operator was originally motivated by the
model for electrorheological fluids, see \cite{RR,R}.
	
\medskip

In \cite{DLM}, the so-called discontinuous Galerkin method 
is considered to approximate the minimizer of 
\eqref{problema}. More precisely, the authors study the 
following discrete functional,
\begin{equation*}
	\begin{aligned}
		I_h(v_h)&:=\int_{\O} |\nabla v_h+R_h(v_h)|^{p(x)}+
		|v_h-\xi|^{q(x)}\, dx
	    +\int_{\partial\O}|v_h-u_D|^{p(x)}
	    {\bf{h}}^{1-p(x)}\,dS\\
	    &+\int_{\Gamma_{int}}|\[v_h\]|^{p(x)}
	    {\bf{h}}^{1-p(x)}\, dS,
    \end{aligned}
\end{equation*}	
where ${\bf{h}}$ is the local mesh size, 
$h$ is the global mesh size,
$\Gamma_{int}$ is the union 
of the interior edges of the elements, 
\begin{math}\[v_h\]\end{math} is the jump of the 
function between two edges, $\nabla v_h$ denotes
the elementwise gradient of $v_h$ and $R_h$ is the lifting 
operator, see Section \ref{Preliminaries} 
for a precise definition. 
Observe that the boundary condition is weakly imposed by the 
second  term of the functional.

With this setting, the discrete problem is to find a 
minimizer $u_h$ of $J_h$ over the space $S^k(\th)$ 
of all the functions that
are polynomials of degree at most $k$ in each element, with 
$k\ge1,$ see Subsection \ref{appA2} for details.

\medskip

In \cite{DLM}, the authors prove the following result.
\begin{teo}\label{conver1}
	Let  $\O$ be a polyhedral domain, $u_D\in W^{2,2}(\O),$ 
	and $u_h\in S^k(\th)$ be the minimizer of ${J}_h$ 
	for any $h\in (0,1]$. 
	If $u$ is the minimizer of ${J}$ then
  	$$
  		I_h(u_{h})\rightarrow  J(u),\quad R_h(u_h)\to 0,
  		\quad  u_{h}\to  u \mbox{ strongly in } \wp, 
  		\mbox{ and }
  	$$
  	\begin{equation*}
    	\int_{\partial\O}|u_h-u_D|^{p(x)}
    	{\bf{h}}^{1-p(x)}\,dS 
	    +\int_{\Gamma_{int}}|\[u_h\]|^{p(x)}
	    {\bf{h}}^{1-p(x)}\, dS \rightarrow  0
	\end{equation*}
\end{teo}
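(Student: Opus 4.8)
The plan is to follow the classical two-sided (liminf--limsup) strategy for convergence of minimizers of convex variational problems, combined with a discrete compactness result for the broken energy space. First I would establish the upper bound. Since $u_h$ minimizes $I_h$ over $S^k(\th)$, for every competitor $w_h\in S^k(\th)$ we have $I_h(u_h)\le I_h(w_h)$, so it suffices to exhibit a recovery sequence with $\limsup_{h\to 0} I_h(w_h)\le J(u)$. I would take $w_h=\Ih u$, the interpolant of the continuous minimizer; because smooth functions are dense in $\wp$, I first reduce by a diagonal argument to the case where $u$ is smooth. For a continuous interpolant the interior jump term $\int_{\gi}|\[w_h\]|^{p(x)}{\bf{h}}^{1-p(x)}\,dS$ and the lifting $R_h(w_h)$ are controlled and vanish, while the boundary penalty tends to zero since $w_h\to u_D$ on $\partial\O$. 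Standard interpolation estimates in the variable exponent setting then give $\limsup_{h\to 0}I_h(u_h)\le\limsup_{h\to 0}I_h(w_h)\le J(u)$.

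Second, this uniform energy bound yields equicoercivity: the modular of the elementwise quantity $\nabla u_h+R_h(u_h)$, the fidelity term, and the scaled boundary and interior jump penalties are all bounded uniformly in $h$. The key step, which I expect to be the main obstacle, is a discrete compactness argument showing that, along a subsequence, $u_h\to u^*$ strongly in $L^{p(\cdot)}(\O)$ for some $u^*\in\wp$, with $\nabla u_h+R_h(u_h)\rightharpoonup\nabla u^*$ weakly and $R_h(u_h)\to 0$. Here the control of the interior jumps is essential: bounding $\|\[u_h\]\|$ forces the broken gradient to reconstruct a genuine Sobolev gradient in the limit and renders the lifting negligible. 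The variable exponent makes this delicate, since one must work with modulars rather than Luxemburg norms throughout and invoke the equivalence between modular convergence and norm convergence.

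Third, I would pass to the limit from below. Since $v\mapsto\int_{\O}|\nabla v|^{p(x)}\,dx$ is convex and the fidelity term is continuous, weak lower semicontinuity gives $J(u^*)\le\liminf_{h\to 0}I_h(u_h)$. The vanishing of the boundary penalty guarantees $u^*-u_D\in\wpc$, so $u^*\in\A$. Combining this with the upper bound yields the chain $J(u^*)\le\liminf_{h\to 0}I_h(u_h)\le\limsup_{h\to 0}I_h(u_h)\le J(u)\le J(u^*)$, where the last inequality uses that $u$ minimizes $J$ over $\A$. Uniqueness of the minimizer forces $u^*=u$ and collapses every inequality to an equality, so $I_h(u_h)\to J(u)$ along the subsequence; as the limit is independent of the subsequence, the whole family converges.

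Finally, the equality $I_h(u_h)\to J(u)$ splits into the volume part converging to $J(u)$ and the remaining penalty and jump contributions tending to zero, which gives both the last displayed limit and $R_h(u_h)\to 0$. Strong convergence $u_h\to u$ in $\wp$ then follows by upgrading the weak convergence: the convergence of the modular of $\nabla u_h+R_h(u_h)$ to that of $\nabla u$, together with the uniform convexity properties of the variable exponent energy, rules out loss of mass and yields strong convergence of the gradients.
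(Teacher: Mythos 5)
Your proposal is correct and takes essentially the same route as the paper's argument (the recovery--sequence/liminf--limsup strategy: interpolation of smooth competitors in $\A$ as in Lemma \ref{auxiliar}, coercivity as in Theorem \ref{cm}, the discrete compactness of Theorem \ref{fusion}, identification of the limit via uniqueness of the minimizer, and the modular-convergence upgrade of Proposition \ref{debil}), which is exactly the strategy of \cite{DLM} for this theorem and the one the paper reuses for Theorem \ref{conv}. One minor correction: $R_h(u_h)\to 0$ is not available at the compactness stage, since boundedness of the jump modular only yields boundedness of $R_h(u_h)$ through Lemma \ref{lifcont1}; the vanishing follows, as you correctly state in your final paragraph, from the energy splitting once $I_h(u_h)\to J(u)$ has been established.
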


\medskip

Since we want to implement this method for some examples, 
the next step is to find a good approximation of the 
minimizer of the discrete functional.

The  methods for finding minimizers of functionals, 
such as the BFGS Quasi- Newton, work when the dimension of 
the space is small. However, we observe that these methods 
are to slow when we refine the mesh. We also observe, 
in some numerical experiments, that the  decomposition-- 
coordination--method (DCM), defined in \cite[Chapter VI]{G}, 
is more suitable for our problem.

\medskip

The DCM is used to approximate the minimizers of functionals 
that can be written in the form
	$$
    		J(v)=F(Bv)+G(v),
  	$$
where  $F:H\to \R$, $G:V\to \R$ are convex functions, 
$B:V\to H$ is a linear operator and $V$ and $H$ are 
topological vector spaces.

In this context, the problem of finding 
minimizers of $J$ 
over $V$ is equivalent to find $(q_0,v_0)\in 
W\coloneqq\{(q,v)\in  V\times H: Bv-q=0\}$ such that
  	\begin{equation}\label{ecudos}
      		F(q_0)+G(v_0)=\min_{(q,v)\in W} \{F(q)+G(v)\}.
   	\end{equation}

\medskip

%In the practical applications, this decomposition is useful 
%when $F$ is non-linear and $G'$ is linear, 
%due to it can be proved that, the problem is reduced to find 
%a sequence $\{(u^n,q^n)\}\subset V\times H$ such that $u^n$ 
%solves a linear differential equation and $q^n$ solves a 
%non-linear equation.

In the practical applications, 
under the following assumptions
\begin{enumerate}\label{hipotesis}
	\item[(H1)] $F : H \to \R,\ G : V \to \R $ are lower semicontinuous functions and 
		$$
			\dom(F\circ B)\cap \dom(G)\neq\emptyset;
		$$
	\item[(H2)] $F$ is a convex Gateaux--diffentiable functional 
	and 
		$$
			\lim_{|q|\to\infty} \frac{F(q)}{|q|}=\infty;
		$$
	\item[(H3)] The rank of $B$ is close in $H;$
	\item[(H4)] $B$ injective;
\end{enumerate}
In \cite{G} the authors prove that there exists
a sequence $\{(u^n,q^n)\}\subset V\times H$ such that $u^n$ 
solves a linear differential equation, $q^n$ solves a 
non-linear equation, and
	\begin{align*}
		u^n\to v_0=u \quad &\mbox{ strongly in } V\\
		q^n\to q_0=B(u)\quad &\mbox{ strongly in } H
	\end{align*}
where $u$ is the minimizer of $J$.

\medskip

If we write the functional $I_h$ in this form, we have that
\begin{equation*}
	I_h(v)=F(Bv)+G(v)
\end{equation*}
where here $V=S^k(\th)$, $H=S^l(\th)\times S^l(\th)$, $k,l\in\mathbb{N}_0$ with $l\geq k-1$, 
$Bv=R_h(v)+\nabla v$,
\begin{align*}
	F(q)&=\int_{\O} |q|^{p(x)} \, dx \quad \mbox{ and }\\
	G(v)&=\int_{\O}|v-\xi|^{q(x)}\, dx
	+\int_{\partial\O}|v-u_D|^{p(x)}{\bf{h}}^{1-p(x)}\,dS
	+\int_{\Gamma_{int}}|\[v\]|^{p(x)}{\bf{h}}^{1-p(x)}\, dS.
\end{align*}

We can observe that, (H1), (H2), (H3) hold, but (H4) does not hold, that is $B$ is not injective.
Moreover, $G'$ is not linear.

To prove the convergence of the method,  it is not necessary 
the assumption that $G'$  is linear, but it is useful for the implementation. For this reason and 
since we are interested in the case   $p(x)\leq 2,$ we define  a new discrete functional
\begin{equation*}
	J_h(v)=F(Bv)+G(v)
\end{equation*}
where now 
\begin{equation*}
	G(v)= \int_{\O}|v_h-\xi|^{2} \, dx
	+\int_{\partial\O}|v_h-u_D|^{2}
	{\bf{h}}^{\nicefrac{-2}{p'(x)}}\,dS
	+\int_{\Gamma_{int}}|\[v_h\]|^{2}
	{\bf{h}}^{\nicefrac{-2}{p'(x)}}\, dS,
\end{equation*}
and $F$ and $B$ are defined as before.
In this manner, $G'$ is linear. Observe that, here we have 
to change the power over the 
function $\bf{h}$.

To overcome the lack of  injectivity of the functional $B$, we will use that our functional 
$G$ is Gateaux-differentiable and convex.

\medskip

Now, we are ready to state the main results of this paper.

\medskip

Since we change the discrete functional we have to prove a result similar to Theorem
\ref{conver1}. More precisely, we prove the following theorem.

\begin{teo}\label{conv}
	Let  $\O$ be a polygonal domain in $\R^N$, 
	$p:\overline{\O}\to [p_1,2]$ ($\nicefrac{N}2<p_1\leq 2$) be a $\log$-H\"{o}lder continuous
	and $u_D\in W^{2,2}(\O)$.
	For each $h\in (0,1]$, let $u_h\in S^k(\th)$ be the minimizer of ${J}_h$. If $u$ is the minimizer of ${J}$ then
	\begin{align*}
		u_{h}&\to u \mbox{ strongly in } L^{s(\cdot)}(\O) \quad \forall s\in \mathcal{K},\\
		u_{h}&\to u \mbox{ strongly  in } L^{2}(\partial\O),\\
		 J_h(u_{h})&\rightarrow  J(u),\\
		 R_h(u_h)&\to 0,\\
		\int_{\partial\O}|u_h-u_D|^{2}{\bf{h}}^{\nicefrac{-2}{p'(x)}}\,dS &
		+\int_{\Gamma_{int}}|\[u_h\]|^{2}{\bf{h}}^{\nicefrac{-2}{p'(x)}}\, dS \rightarrow  0,\\
		\nabla u_{h}&\to \nabla u \mbox{ strongly in } \lp,
	\end{align*}
	where $\mathcal{K}=\{s \in L^{\infty}(\O)\colon 1 \le s(x) < p^*(x) -\varepsilon \mbox{ for some }\varepsilon > 0\}$.
\end{teo}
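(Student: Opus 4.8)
The plan is to establish a uniform a priori bound on the discrete energies $J_h(u_h)$, extract weakly convergent subsequences, identify the limit with the continuous minimizer $u$, and finally upgrade weak convergence to the strong convergences claimed. This mirrors the strategy behind Theorem \ref{conver1}, but with the modified quadratic penalty terms in $G$, so I must track carefully how the exponent change on $\mathbf{h}$ interacts with the estimates.

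\medskip

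\textbf{Step 1: Uniform energy bound.}\ First I would compare $J_h(u_h)$ with $J_h$ evaluated at a suitable discrete approximation of the continuous minimizer $u$ (for instance a quasi-interpolant $\Pi_h u$ of $u_D$-compatible type). Since $u_h$ minimizes $J_h$, one has $J_h(u_h)\le J_h(\Pi_h u)$, and using $u_D\in W^{2,2}(\O)$ together with the approximation properties of $\Pi_h$ and the log-H\"older continuity of $p$, the right-hand side stays bounded independently of $h$. This yields uniform bounds on $\int_\O|\nabla u_h + R_h(u_h)|^{p(x)}\,dx$, on $\|u_h-\xi\|_{\ldos}$, and on the two quadratic jump/boundary penalty integrals. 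The condition $\nicefrac{N}{2}<p_1$ is what guarantees the relevant Sobolev and trace embeddings are available so these bounds translate into control of $u_h$ in a fixed function space.

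\medskip

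\textbf{Step 2: Weak compactness and lifting estimate.}\ From the uniform bounds I would extract a subsequence with $u_h\rightharpoonup u^*$ weakly (in an appropriate variable-exponent Sobolev or broken Sobolev sense) and $Bu_h=\nabla u_h+R_h(u_h)\rightharpoonup \chi$ weakly in $\lp$. The key technical point, exactly as in \cite{DLM}, is that the penalty bound forces $R_h(u_h)\to 0$ strongly and the jumps $\[u_h\]\to 0$, so the broken-gradient limit coincides with a genuine distributional gradient, giving $u^*\in\wp$ with $\chi=\nabla u^*$ and, from the boundary penalty, the trace condition $u^*=u_D$ on $\partial\O$; hence $u^*\in\A$. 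I would then verify $u^*=u$ by a lower-semicontinuity-plus-minimality argument: weak lower semicontinuity of the convex integrands gives $J(u^*)\le\liminf J_h(u_h)$, while testing against $\Pi_h u$ gives $\limsup J_h(u_h)\le J(u)$, forcing $u^*$ to be the (unique) minimizer $u$ and simultaneously $J_h(u_h)\to J(u)$.

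\medskip

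\textbf{Step 3: Strong convergences.}\ The convergence of energies together with weak convergence is the standard mechanism to promote to strong convergence: since the $\rho$-modular and the $L^2$ terms converge, I would deduce $\int_\O|\nabla u_h|^{p(x)}\to\int_\O|\nabla u|^{p(x)}$, and combining this with the weak convergence and the uniform convexity / Clarkson-type inequalities for the $p(x)$-modular yields $\nabla u_h\to\nabla u$ strongly in $\lp$. The strong $L^{s(\cdot)}(\O)$ convergence for $s\in\mathcal K$ follows from the compact embedding $\wp\hookrightarrow\hookrightarrow L^{s(\cdot)}(\O)$ valid precisely when $s(x)<p^*(x)$, and the strong $\ldosb$ trace convergence follows from the vanishing boundary penalty. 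The main obstacle I anticipate is Step 2: making rigorous that the broken-gradient weak limit is an honest gradient in the variable-exponent setting, i.e.\ controlling the lifting operator $R_h$ and the jump terms uniformly so that no spurious boundary or interface measure survives in the limit. The delicate interplay is that the penalty now scales like $\mathbf{h}^{\nicefrac{-2}{p'(x)}}$ with an $L^2$ power rather than the matched $p(x)$-power of Theorem \ref{conver1}, so I must re-derive the jump-control estimates and check that this mismatched scaling still dominates the consistency error — this is where the hypothesis $p_1>\nicefrac{N}{2}$ and the log-H\"older continuity of $p$ do the essential work.
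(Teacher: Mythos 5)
Your overall architecture (uniform energy bound via minimality, compactness, a liminf/limsup pair of inequalities, then upgrading weak to strong convergence through convergence of the modulars) is the same as the paper's, but there is a genuine gap at the heart of your Step 2: the limsup (recovery--sequence) inequality. You propose to test minimality against a quasi-interpolant $\Pi_h u$ of the continuous minimizer $u$ and to conclude $\limsup_h J_h(u_h)\le \lim_h J_h(\Pi_h u)=J(u)$. For this you must show in particular that the boundary penalty $\int_{\partial\O}|\Pi_h u-u_D|^{2}\,{\bf{h}}^{-\nicefrac{2}{p'(x)}}\,dS$ tends to zero. Since the weight ${\bf{h}}^{-\nicefrac{2}{p'(x)}}$ blows up as $h\to0$ (at least like $h^{-(2-\nicefrac{2}{p_1})}$), mere convergence of the trace error in $\ldosb$ is not enough: you need a quantitative rate, essentially $\|\Pi_h u-u_D\|_{\ldosb}$ decaying like a positive power of $h$ that beats the weight, and such interpolation rates require second-order regularity of the function being interpolated. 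The minimizer $u$ is only known to lie in $\A\subset\wp$, so no such rate exists, and the penalty evaluated at $\Pi_h u$ need not even stay bounded. (Your Step 1 is salvageable only if $\Pi_h$ interpolates $u_D\in W^{2,2}(\O)$ rather than $u$; this is exactly what the paper does to get \eqref{cotaimp}.) The paper's way around the obstruction is Lemma \ref{auxiliar} combined with a density argument: recovery sequences are built only for competitors $v\in\A\cap W^{2,2}(\O)$, for which $\|v-v_h\|_{\ldosb}\le Ch\|D^2v\|_{\ldos}$ makes the penalty of order $h^{2-\nicefrac{2}{p_1}}\to0$; this gives $\limsup_j J_{h_j}(u_{h_j})\le J(v)$ for all such $v$, and density of $\A\cap W^{2,2}(\O)$ in $\A$ (which uses the $\log$-H\"older continuity of $p$) then shows simultaneously that the subsequential limit is the minimizer and that $J_{h_j}(u_{h_j})\to J(u)$. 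Without this detour your key step fails.

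A second, smaller error is the claim that the uniform penalty bound already forces $R_h(u_h)\to0$ strongly. It does not: Lemma \ref{lifcont} gives $\|R_h(u_h)\|_{\lp}\le C\|{\bf{h}}^{-\nicefrac{1}{p'(x)}}\[u_h\]\|_{L^2(\Gamma_{int})}$, and the energy bound only makes this right-hand side \emph{bounded}. What the bound does give is that the unweighted jumps vanish, which is what the BV-compactness argument of Theorem \ref{fusion} needs in order to identify the weak limit of $\nabla u_h+R_h(u_h)$ as a genuine gradient of a function in $\wp$. In the paper, $R_{h_j}(u_{h_j})\to0$ is deduced only \emph{after} the energy convergence $J_{h_j}(u_{h_j})\to J(u)$, which together with weak lower semicontinuity of the first two terms forces the weighted jump penalty itself to tend to zero; so the order of these deductions in your Steps 2--3 must be reversed, with the limsup inequality established first.
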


\medskip

We define two algorithms that construct a sequence $\{u_h^n\}_{n\in\mathbb{N}}$ that approximates, for each $h\geq 0$ the  minimizers of $J_h$ and finally we prove the convergence of both algorithms.

\begin{teo}\label{conalg1}
	Let $h\geq 0$ and  $ (u_h, \eta_h, \lambda_h)\in V\times H\times H$ be a saddle-point of  $\mathcal{L}_r$. If
	\begin{equation}\label{condrho}
		0<\alpha_0\leq \rho_n\leq \alpha_1<2r
	\end{equation}
	and $(u_h^n,\eta_h^n,\lambda_h^n)\in V \times H \times H$ is the  solutions given by Algorithm 1 then
	\begin{align*}
		u_h^n\to u_h &\mbox{ in } V, \\
		\eta_h^n\to \eta_h=Bu_h &\mbox{  in } H,\\
		\lambda_h^{n+1}-\lambda_h^n \to 0 &\mbox{  in } H,
	\end{align*}
$$
	\lambda_h^n  \mbox{ is bounded.}
$$		
Moreover,
	\begin{align*}
		\[u_h^n-u_h\]\to 0 &\mbox{  in } \ldosg,\\
		u_h^n \to u_h &\mbox{ in } \ldosb,\\
		R(u_h^{n})\to R(u_h) & \mbox{  in } H,\\
		\nabla u_h^{n} \to \nabla u_h  &\mbox{  in } H.
	\end{align*}
\end{teo}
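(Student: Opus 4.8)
The plan is to adapt the classical convergence proof for the augmented Lagrangian (Uzawa-type) algorithm of \cite[Chapter VI]{G}, the one novelty being that the failure of (H4) must be compensated by the strict convexity of the quadratic functional $G$. First I would record the optimality conditions. Since $(u_h,\eta_h,\lambda_h)$ is a saddle point of $\L_r$, maximizing in the multiplier gives the feasibility $Bu_h=\eta_h$, while the stationarity of the minimization in $(v,q)$ gives $G'(u_h)+B^{*}\lambda_h=0$ and $F'(\eta_h)=\lambda_h$ (here $F'$ exists by (H2) and $G'$ is the linear operator associated with the quadratic $G$). Each iteration of Algorithm 1 computes $(u_h^n,\eta_h^n)$ as the minimizer of $\L_r(\cdot,\cdot,\lambda_h^n)$ and then updates $\lambda_h^{n+1}=\lambda_h^n+\rho_n(Bu_h^n-\eta_h^n)$, so the iterates satisfy $G'(u_h^n)+B^{*}\lambda_h^n+rB^{*}(Bu_h^n-\eta_h^n)=0$ and $F'(\eta_h^n)-\lambda_h^n-r(Bu_h^n-\eta_h^n)=0$. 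Writing $\bar u^n:=u_h^n-u_h$, $\bar\eta^n:=\eta_h^n-\eta_h$, $\bar\lambda^n:=\lambda_h^n-\lambda_h$ and $s^n:=Bu_h^n-\eta_h^n=B\bar u^n-\bar\eta^n$ (using $Bu_h=\eta_h$), and subtracting the saddle-point identities from the iterate identities, I obtain the three error relations that drive the proof.

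The heart of the argument is a one-line energy identity for $|\bar\lambda^n|^2$. Testing the $v$-error relation with $\bar u^n$ and the $q$-error relation with $\bar\eta^n$ and combining them, the terms $(\bar\lambda^n,B\bar u^n)$ and $(\bar\lambda^n,\bar\eta^n)$ recombine into
\begin{equation*}
	(\bar\lambda^n,s^n)=-g^n-f^n-r|s^n|^2,
\end{equation*}
where $g^n:=(G'(u_h^n)-G'(u_h),\bar u^n)\ge0$ and $f^n:=(F'(\eta_h^n)-F'(\eta_h),\bar\eta^n)\ge0$, the signs coming from the convexity of $G$ and of $F$. Inserting this into $|\bar\lambda^{n+1}|^2=|\bar\lambda^n|^2+2\rho_n(\bar\lambda^n,s^n)+\rho_n^2|s^n|^2$ yields
\begin{equation*}
	|\bar\lambda^{n+1}|^2=|\bar\lambda^n|^2-2\rho_n(g^n+f^n)-\rho_n(2r-\rho_n)|s^n|^2.
\end{equation*}
This is exactly where \eqref{condrho} enters: $0<\alpha_0\le\rho_n\le\alpha_1<2r$ forces $\rho_n(2r-\rho_n)\ge\alpha_0(2r-\alpha_1)>0$, so $|\bar\lambda^n|^2$ is nonincreasing, hence bounded, which already proves that $\lambda_h^n$ is bounded.

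Summing the identity over $n$ and telescoping the differences $|\bar\lambda^{n+1}|^2-|\bar\lambda^n|^2$, the boundedness of $|\bar\lambda^n|^2$ gives $\sum_n[\rho_n(g^n+f^n)+\rho_n(2r-\rho_n)|s^n|^2]<\infty$; with \eqref{condrho} this forces $g^n\to0$, $f^n\to0$ and $s^n\to0$ in $H$. From $s^n\to0$ I immediately get $\lambda_h^{n+1}-\lambda_h^n=\rho_n s^n\to0$. The crucial consequence is $g^n\to0$: because $G$ is quadratic,
\begin{equation*}
	g^n=2\Big(\|\bar u^n\|_{\ldos}^2+\int_{\partial\O}|\bar u^n|^2{\bf h}^{\nicefrac{-2}{p'(x)}}\,dS+\int_{\gi}|\[\bar u^n\]|^2{\bf h}^{\nicefrac{-2}{p'(x)}}\,dS\Big),
\end{equation*}
so each of the three nonnegative terms tends to $0$; this gives $u_h^n\to u_h$ in $\ldosb$ and $\[u_h^n-u_h\]\to0$ in $\ldosg$, together with $\|\bar u^n\|_{\ldos}\to0$. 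Since for fixed $h$ the space $V=S^k(\th)$ is finite dimensional and $w\mapsto\|w\|_{\ldos}$ is a norm on $V$, this last convergence upgrades to $u_h^n\to u_h$ in $V$. Finally, as $B=R_h+\nabla$ is linear on the finite-dimensional $V$, hence continuous, $Bu_h^n\to Bu_h=\eta_h$, and then $\eta_h^n=Bu_h^n-s^n\to\eta_h$ in $H$; the same linearity gives $R(u_h^n)\to R(u_h)$ and $\nabla u_h^n\to\nabla u_h$ in $H$.

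The main obstacle, and the reason the modified functional $G$ was introduced, is that (H4) fails, so the usual final step of the Uzawa proof, in which $Bu_h^n\to Bu_h$ is promoted to $u_h^n\to u_h$ through injectivity and closed range of $B$, is unavailable. The device that replaces it is the strict convexity of the quadratic $G$: it is $G$, not $B$, that supplies the coercive control $g^n\gtrsim\|\bar u^n\|_{\ldos}^2$, and this is what converts $g^n\to0$ into genuine convergence of $u_h^n$. The point requiring care is therefore to retain the full term $2\rho_n g^n$ (with positive coefficient) in the energy identity rather than discarding it, and to check that the quadratic form associated with $G'$ dominates a norm on $V$; the remaining manipulations — the sign of $f^n$, the telescoping, and the threshold in \eqref{condrho} — are then routine.
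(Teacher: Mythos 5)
Your proof is correct, and its essential mechanism is the same as the paper's: the monotonicity term $g^n=(G'(u_h^n)-G'(u_h))(u_h^n-u_h)$ of the strictly convex quadratic $G$ substitutes for the missing hypothesis (H4), its explicit form yields the $L^2(\O)$, $L^2(\partial\O)$ and jump convergences, and finite-dimensionality of $V$ upgrades this to $u_h^n\to u_h$ in $V$. The organization, however, is genuinely different. The paper does not reprove the energy estimate: it invokes Glowinski's theorem (Remark \ref{lapapa2}, i.e.\ \eqref{glow1}), valid under (H1)--(H3) alone, to obtain $s^n\coloneqq Bu_h^n-\eta_h^n\to 0$, $\eta_h^n\to\eta_h$, $Bu_h^n\to Bu_h$, $\lambda_h^{n+1}-\lambda_h^n\to0$ and boundedness of $\lambda_h^n$, and only then gets $g^n\to 0$ a posteriori: combining the characterizations \eqref{sadle2} and \eqref{sadle33} (taking $v=u_h^n$ in one and $v=u_h$ in the other) gives $0\le g^n\le -\langle\lambda_h^n-\lambda_h,B(u_h^n-u_h)\rangle-r\langle B(u_h^n-u_h)-(\eta_h^n-\eta_h),B(u_h^n-u_h)\rangle$, whose right-hand side vanishes in the limit by the cited facts. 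You instead keep $g^n$ (and $f^n$) inside the telescoped identity for $|\lambda_h^n-\lambda_h|^2$, so that $g^n\to0$, $f^n\to0$, $s^n\to0$ and the boundedness of $\lambda_h^n$ all fall out of a single summability argument under \eqref{condrho}, and you then recover $Bu_h^n\to Bu_h$ and $\eta_h^n\to\eta_h$ afterwards from $u_h^n\to u_h$ and $s^n\to 0$ --- exactly reversing the paper's order of deduction. Your route buys self-containedness (you in effect reprove the Glowinski estimate with the $G$-term retained) and makes transparent that $g^n\to0$ needs no prior knowledge of $Bu_h^n\to Bu_h$; the paper's route buys brevity and a clean separation between the standard augmented-Lagrangian theory and the one new ingredient. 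Two minor points: your replacement of the variational inequalities by equalities is legitimate precisely because $V$ and $H$ are linear spaces and $F,G$ are Gateaux differentiable (this is the content of Remark \ref{caracterizacion}); and for $R(u_h^n)\to R(u_h)$ the paper argues via the lifting bound of Lemma \ref{lifcont} applied to the jumps, whereas you use linearity of $R_h$ on the finite-dimensional $V$ --- both are valid.
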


\begin{teo}\label{conalg2}
	Let $h\geq 0$ and  $ (u_h, \eta_h, \lambda_h)\in V\times H\times H$ be a saddle-point of  $\mathcal{L}_r$. If
	\begin{equation}\label{condrho1}
		0<\rho_n=\rho<r\frac{(1+\sqrt{5})}{2}
	\end{equation}
	and $(u_h^n,\eta_h^n,\lambda_h^n)\in V \times H \times H$ is the  solutions given by Algorithm 2 then
	\begin{align*}
		\eta_h^n\to \eta_h=Bu_h &\mbox{  in } H ,\\
		\lambda_h^{n+1}-\lambda_h^n \to 0 &\mbox{  in } H,
	\end{align*}
	$$
	\lambda_h^n  \mbox{ is bounded }.
	$$	
	Moreover,
	\begin{align*}
		u_h^n\to u_h &\mbox{  in } V,\\ 
		\[u_h^n-u_h\]\to 0 &\mbox{  in } \ldosg,\\
		u_h^n \to u &\mbox{  in } \ldosb,\\
		R(u_h^{n})\to R(u_h) & \mbox{  in } H,\\
		\nabla u_h^{n} \to \nabla u_h  &\mbox{  in } H.
	\end{align*}
\end{teo}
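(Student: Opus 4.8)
The plan is to adapt the convergence analysis of the augmented Lagrangian algorithm ALG2 of \cite[Chapter~VI]{G} to our functional $J_h=F(Bv)+G(v)$. The one genuine difference is that assumption (H4) fails, since $B$ is not injective; I will therefore replace every appeal to the injectivity of $B$ by the \emph{strong convexity} of the new quadratic functional $G$, together with the finite dimensionality of $V=S^k(\th)$ and $H=S^l(\th)\times S^l(\th)$. First I would introduce the augmented Lagrangian
\begin{equation*}
	\mathcal{L}_r(v,q,\mu)=F(q)+G(v)+\langle \mu,\,Bv-q\rangle+\tfrac{r}{2}|Bv-q|^2,
\end{equation*}
and record the Euler equations of its saddle point $(u_h,\eta_h,\lambda_h)$, namely $G'(u_h)+B^*\lambda_h=0$, $F'(\eta_h)-\lambda_h=0$ and $Bu_h-\eta_h=0$; these are meaningful because $F$ and $G$ are Gateaux--differentiable. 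Writing the three steps of Algorithm 2 (minimise $\mathcal{L}_r(\cdot,\eta_h^n,\lambda_h^n)$ to obtain $u_h^{n+1}$, then $\mathcal{L}_r(u_h^{n+1},\cdot,\lambda_h^n)$ to obtain $\eta_h^{n+1}$, then set $\lambda_h^{n+1}=\lambda_h^n+\rho\,(Bu_h^{n+1}-\eta_h^{n+1})$) as Euler equations and subtracting the saddle-point relations, I obtain an error system for $\bar u^n=u_h^n-u_h$, $\bar\eta^n=\eta_h^n-\eta_h$, $\bar\lambda^n=\lambda_h^n-\lambda_h$ and the defect $d^{n+1}=B\bar u^{n+1}-\bar\eta^{n+1}$.

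Second, I would build a Lyapunov functional of the form $\mathcal{E}^n=\tfrac1\rho|\bar\lambda^n|^2+r|\bar\eta^n|^2$. Testing the $u$-equation with $\bar u^{n+1}$ and using the monotonicity inequalities $\langle G'(u_h^{n+1})-G'(u_h),\bar u^{n+1}\rangle\ge0$ and $\langle F'(\eta_h^{n+1})-F'(\eta_h),\bar\eta^{n+1}\rangle\ge0$ (convexity of $F$ and $G$) to eliminate the cross terms produced when expanding $|\bar\lambda^{n+1}|^2$ via the multiplier update, one arrives at a dissipation inequality
\begin{equation*}
	\mathcal{E}^{n+1}\le \mathcal{E}^n-c\,|d^{n+1}|^2-(\text{monotonicity brackets}),
\end{equation*}
in which the coefficient $c$ is, up to positive factors, $-(\rho^2-\rho r-r^2)$. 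Its positivity is exactly $(\rho/r)^2-(\rho/r)-1<0$, that is $\rho<r\,(1+\sqrt5)/2$, which is hypothesis \eqref{condrho1}: this is the role of the golden ratio. Since $\mathcal{E}^n\ge0$ is then nonincreasing it is bounded and convergent, so $\lambda_h^n$ and $\eta_h^n$ are bounded, and the telescoping sum of the dissipation terms is finite; consequently $d^{n+1}\to0$, $\lambda_h^{n+1}-\lambda_h^n=\rho\,d^{n+1}\to0$ in $H$, and the monotonicity brackets tend to $0$.

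Third comes the step where injectivity was classically used. From $\langle F'(\eta_h^{n+1})-F'(\eta_h),\bar\eta^{n+1}\rangle\to0$, the strict monotonicity of $F'$ (each $|q|^{p(x)}$ is strictly convex), the boundedness of $\{\eta_h^n\}$ and the finite dimensionality of $H$, a standard subsequence argument forces $\bar\eta^n\to0$, i.e. $\eta_h^n\to\eta_h=Bu_h$ in $H$. To recover $u_h^n\to u_h$ \emph{without} injectivity, I would isolate $\langle G'(u_h^{n+1})-G'(u_h),\bar u^{n+1}\rangle$ from the $u$-error equation tested with $\bar u^{n+1}$; its right-hand side is a combination of $\langle\bar\lambda^n,B\bar u^{n+1}\rangle$ and $r\langle B\bar u^{n+1}-\bar\eta^n,B\bar u^{n+1}\rangle$, whose factors ($\bar\lambda^n$ bounded, $d^{n+1}\to0$, $\bar\eta^n\to0$, hence $B\bar u^{n+1}\to0$) drive it to $0$. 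Because the new $G$ is quadratic,
\begin{equation*}
	\langle G'(v)-G'(w),v-w\rangle\ge 2\,\|v-w\|_{\ldos}^2,
\end{equation*}
so $\|u_h^n-u_h\|_{\ldos}\to0$; as $V=S^k(\th)$ is finite dimensional, the $L^2(\O)$-norm is equivalent on $V$ to the $V$-norm, whence $u_h^n\to u_h$ strongly in $V$.

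Finally, the remaining assertions follow from this strong convergence in $V$: since $\nabla(\cdot)$, the lifting $R$, the jump $\[\cdot\]$ and the trace are continuous linear maps on the finite-dimensional space $S^k(\th)$, we get $\nabla u_h^n\to\nabla u_h$ and $R(u_h^n)\to R(u_h)$ in $H$, $\[u_h^n-u_h\]\to0$ in $\ldosg$, and $u_h^n\to u_h$ in $\ldosb$. The main obstacle throughout is precisely the failure of (H4): the energy estimate controls only $\eta_h^n$ and $\lambda_h^n$, and it is the strong convexity of $G$ — made available by replacing the $L^{p(\cdot)}$/$L^{q(\cdot)}$ terms by $L^2$ terms — together with finite dimensionality that converts $B\bar u^n\to0$ into $\bar u^n\to0$.
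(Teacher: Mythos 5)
Your proposal is correct, and the step that actually carries the paper's novelty coincides with the paper's own argument: since (H4) fails, you cross-test the variational inequality characterizing the $u$-step of Algorithm 2 (which involves $\eta^{n-1}$ and $\lambda^n$) against the saddle-point inequality \eqref{sadle2}, observe that the resulting brackets $\langle\bar\lambda^n,B\bar u^n\rangle$ and $r\langle B\bar u^n-\bar\eta^{n-1},B\bar u^n\rangle$ vanish in the limit (boundedness of $\lambda^n$, $Bu^n\to Bu$, $\eta^{n-1}\to\eta$), conclude $(G'(u^n)-G'(u))(u^n-u)\to 0$, and then exploit that the new $G$ is quadratic, hence strongly monotone, to convert this into strong convergence of $u^n$; this is exactly how the paper proves Theorem \ref{conalg1} and then Theorem \ref{conalg2} by the substitution $\eta^n\rightsquigarrow\eta^{n-1}$. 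The genuine difference is one of self-containedness. Where you re-derive the classical augmented-Lagrangian estimates — the Lyapunov functional $\tfrac1\rho\|\bar\lambda^n\|^2+r\|\bar\eta^n\|^2$, the golden-ratio condition $\rho^2-\rho r-r^2<0$, and the compactness/strict-monotonicity-of-$F'$ argument giving $\eta^n\to\eta$ in finite dimensions — the paper simply cites \cite[Chapter VI, Theorems 4.1 and 5.1]{G} through Remark \ref{lapapa2}, whose conclusion \eqref{glow1} already contains $\|Bu^n-\eta^n\|\to0$, $\|\eta^n-\eta\|\to0$, $\lambda^{n+1}-\lambda^n\to0$ and boundedness of $\lambda^n$ without using (H4). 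Your route buys a proof readable without Glowinski's book and makes explicit where finite dimensionality and the strict monotonicity of $F'$ enter; the paper's buys brevity. A second, cosmetic difference: the paper reads off the convergence of the jump and boundary terms directly from the identity expressing $(G'(u^n)-G'(u))(u^n-u)$ as the sum of the three nonnegative quantities $\int_\O|u^n-u|^2dx$, $\int_{\Gamma_{int}}|\[u^n-u\]|^2{\bf h}^{-\nicefrac{2}{p'(x)}}dS$ and $\int_{\partial\O}|u^n-u|^2{\bf h}^{-\nicefrac{2}{p'(x)}}dS$, each of which must then tend to zero, and gets $R(u^n)\to R(u_h)$ from Lemma \ref{lifcont}; you instead keep only the $L^2(\O)$ part and recover the rest from equivalence of norms and continuity of linear maps on the finite-dimensional space $S^k(\th)$ — both are valid for fixed $h$.
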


\medskip

Let us end the introduction with a brief comments on previous bibliography. In \cite{BDS,DM2},
the convergence of conforming finite element method approximations for the Dirichlet problem of the $p(\cdot)-$Laplacian
is studied. Moreover, in \cite{DM2}, we study the convergence rate using the regularity results obtained in \cite{DM1}.

Finally, we want to mention that in \cite{BCE} and \cite{CLR} the authors find an approximation of the solutions 
by using an explicit finite difference scheme for the associated parabolic problem.

\medskip

{\bf Outline of the paper.} In Section \ref{Preliminaries}, we state several properties of the variable
exponent Sobolev spaces, we give some definitions and properties related to the mesh and to
the broken Sobolev spaces; In section \ref{contg}, we prove Theorem \ref{conv};
In section \ref{dcor}, the decomposition-coordination method is studied and the convergence of the algorithms are proved;
Finally, in Section \ref{numerical}, we give some numerical examples.
%%%%%%%%%%%%%%%%%%%%%%%%%%%%%%%%%%%%%%%%%%%%%%%%%%%%%%%%%%%%
%%%%%%%%%%%%%%%%%%%%%%%%%%%%%%%%%%%%%%%%%%%%%%%%%%%%%%%%%%%%
\section{Preliminaries} 
\label{Preliminaries}
We begin with a review of the basic results that will be needed in subsequent sections. 
The results are generally stated without proof, although we attempt to provide 
good references where the proofs can be found. Also, we introduce some of our notational 
conventions.

\subsection{The spaces $L^{p(\cdot)}(\Omega)$ and $W^{1,p(\cdot)}(\Omega)$}
We first introduce the spaces  $L^{p(\cdot)}(\Omega)$ and $W^{1,p(\cdot)}(\Omega)$ 
and state some of their properties.

\medskip

Let $p \colon\Omega \to  [p_1,p_2]$ be a measurable bounded
function, called a variable exponent on $\Omega$ where
$p_{1}\coloneqq \essinf \,p(x)$ and $p_{2} \coloneqq \esssup \,p(x)$ 
with $1\le p_1\leq p_2<\infty$.

We define the variable exponent Lebesgue space
$L^{p(\cdot)}(\Omega)$ to consist of all measurable functions 
$u\colon\Omega \to \R$ for which the modular
$$
	\varrho_{p(\cdot)}(u) \coloneqq \int_{\Omega} 
	|u(x)|^{p(x)}\, dx
$$
is finite. We define the Luxemburg norm on this space by
$$
	\|u\|_{L^{p(\cdot)}(\Omega)} = \|u\|_{p(\cdot)} 
	\coloneqq 
	\inf\{k >0\colon \varrho_{p(\cdot)}(u/k)\leq 1 \}.
$$
This norm makes $L^{p(\cdot)}(\Omega)$ a Banach space.

\medskip

The following properties can be obtained directly from the
definition of the norm. For the proof see \cite[Theorem 1.3 and Theorem 1.4 ]{FS}.

\begin{prop}\label{equi}
If $u,u_n\in L^{p(\cdot)}(\Omega)$, 
$\|u\|_{p(\cdot)}=\lambda,$
then
\begin{enumerate}
	\item $\lambda<1$ $(=1, >1)$ iff 
		$\di\int_{\O}|u(x)|^{p(x)}\, dx<1\ (=1,>1);$ 
	\item If $\lambda\geq1,$ then 
		$\di\lambda^{p_1}\le \int_{\O} |u(x)|^{p(x)}\, 
		dx\leq\lambda^{p_2};$ 
	\item If $\lambda\leq 1,$ then
		$\di\lambda^{p_2}\le \int_{\O} |u(x)|^{p(x)}\, 
		dx\leq
		\lambda^{p_1};$ 
	\item $\di\int_{\O}|u_n(x)|^{p(x)}\, dx \to 0$ iff
		$\di\|u_n\|_{p(\cdot)}\to 0.$ 
\end{enumerate}
\end{prop}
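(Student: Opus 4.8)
The plan is to reduce all four items to one key identity: for $u\neq 0$ with $\lambda\coloneqq\|u\|_{p(\cdot)}\in(0,\infty)$, the defining infimum is attained and equals $1$, i.e. $\varrho_{p(\cdot)}(u/\lambda)=1$. To establish this I would study the single–variable function $\phi(k)\coloneqq\varrho_{p(\cdot)}(u/k)=\di\int_{\O}|u(x)|^{p(x)}k^{-p(x)}\,dx$ for $k>0$. It is non-increasing, since $k\mapsto|u(x)/k|^{p(x)}$ is non-increasing pointwise, so the sublevel set $\{k>0\colon\phi(k)\le1\}$ is an interval with left endpoint $\lambda$. Approaching $\lambda$ from above along $k_n\downarrow\lambda$, monotone convergence gives $\phi(k_n)\uparrow\phi(\lambda)$, and since each $\phi(k_n)\le1$ this forces $\phi(\lambda)\le1$. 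Approaching from below along $k_n\uparrow\lambda$, each $\phi(k_n)>1$; to pass to the limit I would dominate, fixing any $k_0>\lambda$ (so $\phi(k_0)<\infty$) and bounding $|u/k_n|^{p(x)}=(k_0/k_n)^{p(x)}|u/k_0|^{p(x)}\le(k_0/k_1)^{p_2}|u/k_0|^{p(x)}$, an integrable majorant. Dominated convergence then yields $\phi(\lambda)\ge1$, whence $\phi(\lambda)=1$.

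Granting the identity $\varrho_{p(\cdot)}(u/\lambda)=1$, items (2) and (3) are immediate from the elementary scaling of powers of $\lambda$. When $\lambda\ge1$ one has $\lambda^{p_1}\le\lambda^{p(x)}\le\lambda^{p_2}$ pointwise, so inserting these one-sided bounds into $1=\di\int_{\O}|u|^{p(x)}\lambda^{-p(x)}\,dx$ and integrating gives $\lambda^{p_1}\le\varrho_{p(\cdot)}(u)\le\lambda^{p_2}$; when $0<\lambda\le1$ the inequalities between the powers reverse and the same computation produces $\lambda^{p_2}\le\varrho_{p(\cdot)}(u)\le\lambda^{p_1}$ (the case $\lambda=0$, i.e. $u=0$, being trivial). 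Item (1) then follows by splitting into the three mutually exclusive and exhaustive cases: if $\lambda<1$, item (3) forces $\varrho_{p(\cdot)}(u)\le\lambda^{p_1}<1$; if $\lambda=1$, item (2) forces $\lambda^{p_1}=1\le\varrho_{p(\cdot)}(u)\le\lambda^{p_2}=1$; and if $\lambda>1$, item (2) forces $\varrho_{p(\cdot)}(u)\ge\lambda^{p_1}>1$. Since this trichotomy is matched on both the norm side and the modular side, the three stated equivalences hold.

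Finally, item (4) is a direct consequence of item (3). If $\varrho_{p(\cdot)}(u_n)\to0$, then eventually $\|u_n\|_{p(\cdot)}\le1$, and (3) gives $\|u_n\|_{p(\cdot)}^{p_2}\le\varrho_{p(\cdot)}(u_n)\to0$, so $\|u_n\|_{p(\cdot)}\to0$ because $p_2<\infty$; conversely, if $\|u_n\|_{p(\cdot)}\to0$, then eventually $\|u_n\|_{p(\cdot)}\le1$ and (3) gives $\varrho_{p(\cdot)}(u_n)\le\|u_n\|_{p(\cdot)}^{p_1}\to0$. The one genuinely delicate point is the attainment step $\varrho_{p(\cdot)}(u/\lambda)=1$: the inequality $\le1$ comes for free from monotone convergence, but the reverse inequality requires passing the limit through the integral from below, and this is exactly where the hypothesis $p_2=\esssup p<\infty$ enters, to build the integrable dominator. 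Everything else is bookkeeping with the power inequalities for $\lambda$.
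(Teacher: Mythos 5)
Your proof is correct. The paper does not actually prove Proposition~\ref{equi} --- it only cites Theorems~1.3 and~1.4 of \cite{FS} --- and your argument is essentially the standard proof found there: establish the attainment identity $\varrho_{p(\cdot)}(u/\lambda)=1$ for $u\neq 0$ via monotone convergence from above and dominated convergence from below (the latter being exactly where $p_2<\infty$ enters), then deduce (1)--(4) by elementary comparison of the powers $\lambda^{p_1}$, $\lambda^{p(x)}$, $\lambda^{p_2}$; the only step you leave implicit, that $u\neq 0$ forces $\lambda\in(0,\infty)$, follows routinely from Fatou's lemma and the finiteness of $\varrho_{p(\cdot)}(u)$.
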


For the proofs of the following two theorems we refer the 
reader to \cite{KR}.

\begin{teo}\label{imb}
	Let $q(x)\leq p(x)$, then
 	$L^{p(\cdot)}(\Omega)
 	\hookrightarrow L^{q(\cdot)}(\Omega)$
	continuously.
\end{teo}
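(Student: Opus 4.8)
The plan is to reduce the continuous embedding to an elementary pointwise inequality combined with the translation between modular and norm furnished by Proposition \ref{equi}. The key observation is that, since $q(x)\le p(x)$, for every real number $a\ge 0$ and a.e.\ $x\in\Omega$ one has the Young-type bound
\begin{equation*}
	a^{q(x)}\le a^{p(x)}+1 ,
\end{equation*}
which follows from the weighted arithmetic--geometric mean inequality $x^{\theta}y^{1-\theta}\le \theta x+(1-\theta)y$ applied to $x=a^{p(x)}$, $y=1$ and $\theta=q(x)/p(x)\in(0,1]$ (the case $q(x)=p(x)$ being trivial). First I would integrate this pointwise inequality over $\Omega$; using that $\Omega$ is bounded, hence $|\Omega|<\infty$, this yields the modular estimate
\begin{equation*}
	\varrho_{q(\cdot)}(u)=\int_\Omega |u(x)|^{q(x)}\,dx\le \int_\Omega |u(x)|^{p(x)}\,dx+|\Omega|=\varrho_{p(\cdot)}(u)+|\Omega| .
\end{equation*}
In particular, if $u\in L^{p(\cdot)}(\Omega)$ then $\varrho_{q(\cdot)}(u)<\infty$, so $u\in L^{q(\cdot)}(\Omega)$ and the inclusion map is well defined.

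Next I would establish the norm inequality $\|u\|_{q(\cdot)}\le C\,\|u\|_{p(\cdot)}$ by a normalization argument. For $u\neq 0$ set $v=u/\|u\|_{p(\cdot)}$; by Proposition \ref{equi}(1) we have $\varrho_{p(\cdot)}(v)=1$, so the modular estimate above gives $\varrho_{q(\cdot)}(v)\le 1+|\Omega|$. Writing $q_1=\essinf q\ge 1$ and choosing $K=(1+|\Omega|)^{1/q_1}\ge 1$, the monotonicity bound $K^{-q(x)}\le K^{-q_1}$ yields
\begin{equation*}
	\varrho_{q(\cdot)}(v/K)=\int_\Omega K^{-q(x)}|v(x)|^{q(x)}\,dx\le K^{-q_1}\varrho_{q(\cdot)}(v)\le K^{-q_1}(1+|\Omega|)=1 .
\end{equation*}
By the definition of the Luxemburg norm this forces $\|v\|_{q(\cdot)}\le K$, and rescaling gives $\|u\|_{q(\cdot)}\le (1+|\Omega|)^{1/q_1}\,\|u\|_{p(\cdot)}$, which is exactly the continuity of the embedding.

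There is no serious obstacle here: the argument is entirely elementary once the finiteness of $|\Omega|$ is available. The only point requiring care is the passage from a modular bound to a norm bound, where the lower exponent $q_1\ge 1$ and the monotonicity of $t\mapsto K^{-t}$ for $K\ge 1$ are used; this is precisely where Proposition \ref{equi} does the work of converting between the modular and the Luxemburg norm. I would also emphasize that the finite-measure hypothesis is indispensable — the embedding is false on domains of infinite measure — so the assumption that $\Omega$ is bounded enters the proof in an essential way.
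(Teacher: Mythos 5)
Your proof is correct. Note, however, that the paper itself does not prove this theorem at all: it simply refers the reader to the reference \cite{KR} (Kov\'a\v{c}ik--R\'akosn\'{\i}k), so there is no in-paper argument to compare against. What you have written is the standard self-contained proof, and it is essentially the argument found in that reference: the pointwise bound $a^{q(x)}\le a^{p(x)}+1$ gives the modular estimate $\varrho_{q(\cdot)}(u)\le\varrho_{p(\cdot)}(u)+|\Omega|$ (this is where the finiteness of $|\Omega|$ enters, and you rightly flag that the embedding fails on sets of infinite measure), and the normalization $v=u/\|u\|_{p(\cdot)}$ together with the unit-ball property of Proposition \ref{equi}(1) converts it into the norm inequality $\|u\|_{q(\cdot)}\le(1+|\Omega|)^{1/q_1}\|u\|_{p(\cdot)}$. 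Two small points worth recording: the equality $\varrho_{p(\cdot)}(v)=1$ for $\|v\|_{p(\cdot)}=1$ uses that $p$ is bounded above (true here, since $p\le p_2<\infty$, and in fact $q\le p$ then forces $q$ to be bounded as well); and your passage from the modular bound $\varrho_{q(\cdot)}(v/K)\le1$ to $\|v\|_{q(\cdot)}\le K$ needs only the definition of the Luxemburg norm as an infimum, not the full strength of Proposition \ref{equi}. Your argument thus supplies a complete elementary proof where the paper offers only a citation.
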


\begin{teo}\label{ref}
	Let $\di p'(x)$ such that, ${1}/{p(x)}+{1}/{p'(x)}=1.$ 
	Then
	$L^{p'(\cdot)}(\Omega)$ is the dual of 
	$L^{p(\cdot)}(\Omega)$.
	Moreover, if $p_1>1$, $L^{p(\cdot)}(\Omega)$ and
	$W^{1,p(\cdot)}(\Omega)$ are reflexive.
\end{teo}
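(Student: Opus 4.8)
The plan is to establish the duality first and then derive reflexivity from it together with uniform convexity. For the duality, I would begin with the variable-exponent Hölder inequality: for $u\in\lp$ and $g\in\lpp$ one has $\di\int_{\O}|u\,g|\,dx\le C\|u\|_{p(\cdot)}\|g\|_{p'(\cdot)}$ with $C$ depending only on $p_1,p_2$, which is proved by Young's inequality applied pointwise and integrated, using Proposition \ref{equi} to pass between the modular and the norm. This shows that the map $\Phi\colon\lpp\to(\lp)^*$ given by $\Phi(g)(u)=\di\int_{\O}u\,g\,dx$ is well defined and bounded, and injectivity is immediate since $\Phi(g)=0$ forces $g=0$ a.e. (test against $u=\operatorname{sgn}(g)\chi_E$ on sets of finite measure).

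The substantive part is surjectivity. Given $L\in(\lp)^*$, I would define the set function $\nu(E)=L(\chi_E)$ on measurable $E\subset\O$ of finite measure; continuity of $L$ together with dominated convergence shows $\nu$ is countably additive and absolutely continuous with respect to Lebesgue measure, so the Radon--Nikodym theorem produces a locally integrable $g$ with $\nu(E)=\int_E g\,dx$. By linearity $L(u)=\int_\O u\,g\,dx$ for simple $u$, and since $p_2<\infty$ simple functions are dense in $\lp$, this identity extends to all of $\lp$ once we know $g\in\lpp$. I expect this last point to be the main obstacle: one must show the density actually lies in $\lpp$ with $\|g\|_{p'(\cdot)}\le C\|L\|$. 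The standard device is to truncate, setting $g_n=g\,\chi_{\{|g|\le n\}\cap B_n}$, to plug the normalized test function $u_n=|g_n|^{p'(x)-1}\operatorname{sgn}(g)$ into $L$, and to use Proposition \ref{equi} to convert the resulting estimate into a uniform bound on $\varrho_{p'(\cdot)}(g_n)$; letting $n\to\infty$ and invoking monotone convergence then yields $g\in\lpp$ and the norm bound, identifying $(\lp)^*$ with $\lpp$.

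For reflexivity under the extra hypothesis $p_1>1$, the cleanest route is uniform convexity together with the Milman--Pettis theorem. When $1<p_1\le p_2<\infty$ the modular $\varrho_{p(\cdot)}$ satisfies Clarkson-type inequalities (treating the ranges $p(x)\ge2$ and $1<p(x)\le2$ separately), and via the norm--modular equivalences of Proposition \ref{equi} these upgrade to uniform convexity of the Luxemburg norm; hence $\lp$ is reflexive. Here the role of $p_1>1$ is essential: it guarantees $p'(x)\le p_1'<\infty$, so that $\lpp$ is itself a genuine variable-exponent Lebesgue space with bounded exponent, and the duality above applies in both directions. Alternatively one may argue directly that the canonical embedding into the bidual is onto by combining $(\lp)^*\cong\lpp$ and $(\lpp)^*\cong\lp$.

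Finally, reflexivity of $\wp$ follows formally: the map $u\mapsto(u,\nabla u)$ is an isometric embedding of $\wp$ onto a closed subspace of the product $\lp\times(\lp)^N$, which is reflexive as a finite product of reflexive spaces, and a closed subspace of a reflexive space is reflexive.
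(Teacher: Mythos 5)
The paper offers no internal proof of this theorem: it is quoted from \cite{KR} (``For the proofs of the following two theorems we refer the reader to \cite{KR}''), so there is nothing in-house to compare against. Your argument is essentially a reconstruction of the classical proof from that reference --- Radon--Nikodym representation plus a truncation/normalization argument for the duality, then reflexivity either by uniform convexity or by composing the two dualities, and the closed-subspace-of-a-product argument for $\wp$ --- and under the hypothesis $p_1>1$ it is correct in all essentials.

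There is, however, one genuine gap relative to the statement as written. In the standing assumptions of this section the exponent satisfies only $1\le p_1\le p_2<\infty$, and the hypothesis $p_1>1$ is invoked only in the ``Moreover'' (reflexivity) clause; the duality assertion is therefore claimed for $p_1=1$ as well, and your surjectivity step collapses precisely there. First, on the set $\{p(x)=1\}$ one has $p'(x)=\infty$, so the test function $u_n=|g_n|^{p'(x)-1}\operatorname{sgn}(g)$ and the modular $\varrho_{p'(\cdot)}(g_n)$ are not even defined by the paper's conventions (the paper defines $L^{p(\cdot)}$ only for finite, bounded exponents). Second, even if $p(x)>1$ a.e.\ but $p_1=1$, the quantitative step fails: writing $\varrho=\varrho_{p'(\cdot)}(g_n)$, your computation gives $\varrho=\varrho_{p(\cdot)}(u_n)$, hence via Proposition \ref{equi} the estimate $\varrho\le\|L\|\,\varrho^{1/p_1}$ in the relevant case $\|u_n\|_{p(\cdot)}\ge 1$; concluding $\varrho\le\|L\|^{p_1'}$ requires $1-1/p_1>0$, i.e.\ $p_1>1$, and for $p_1=1$ the inequality is vacuous. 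Covering the full statement requires handling the region where $p$ equals (or is not bounded away from) $1$ by the classical $(L^1)^*=L^\infty$-type argument, showing $|g|\le C\|L\|$ a.e.\ there, which is what \cite{KR} does using an extended modular that allows $p'=\infty$. Since everywhere else in the paper $p_1>1$, this does not affect anything downstream, but it must be fixed if your proof is meant to replace the citation. A minor final point: in your alternative bidual route to reflexivity, one must check that the composition of the identifications $(\lp)^*\cong\lpp$ and $(\lpp)^*\cong\lp$ is the canonical embedding of $\lp$ into its bidual (it is, by inspecting the pairing); mere isomorphism of a space with its bidual does not by itself give reflexivity.
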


Now we give  some well known inequalities.

\begin{prop}\label{desip} 
	For any $x$ fixed we have the following inequalities
	\begin{align*}
		&|\eta-\xi|^{p(x)}\leq C (|\eta|^{p(x)-2} \eta-
		|\xi|^{p(x)-2} \xi)(\eta-\xi) &\quad 
		\mbox{ if } p(x)\geq 2,\\
		&|\eta-\xi|^2\Big(|\eta|+|\xi|\Big)^{p(x)-2}
		\leq C (|\eta|^{p(x)-2} \eta-|\xi|^{p(x)-2} \xi)
		(\eta-\xi)&\quad  \mbox{ if } p(x)< 2,\\
		&|\eta|^{p(x)}\leq2^{p(x)-1}
		(|\eta-\xi|^{p(x)}+|\xi|^{p(x)})
		&\quad  \mbox{ if }p(x)\geq 1.
	\end{align*}
\end{prop}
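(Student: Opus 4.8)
The plan is to fix $x\in\overline{\O}$ and set $p:=p(x)\in[p_1,p_2]$, so that all three inequalities reduce to classical statements for a constant exponent; the constants can then be taken uniform in $x$, since $p$ ranges over the compact interval $[p_1,p_2]$. I will treat $\eta,\xi\in\R^N$ (the scalar case being included). The third inequality is the easiest: from the triangle inequality $|\eta|\le|\eta-\xi|+|\xi|$ and the convexity of $t\mapsto t^{p}$ on $[0,\infty)$ (valid for $p\ge1$) one gets $\left(\tfrac{a+b}{2}\right)^{p}\le\tfrac{a^{p}+b^{p}}{2}$, hence $(a+b)^{p}\le 2^{p-1}(a^{p}+b^{p})$; taking $a=|\eta-\xi|$ and $b=|\xi|$ yields exactly the claim.

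For the first two inequalities I would introduce the vector field $V(w)=|w|^{p-2}w$ and express the monotonicity term via the fundamental theorem of calculus:
\[
(V(\eta)-V(\xi))\cdot(\eta-\xi)=\int_0^1 DV(w_t)(\eta-\xi)\cdot(\eta-\xi)\,dt,\qquad w_t:=(1-t)\xi+t\eta.
\]
A direct computation gives $DV(w)\zeta=|w|^{p-2}\zeta+(p-2)|w|^{p-4}(w\cdot\zeta)w$, so that
\[
DV(w)\zeta\cdot\zeta=|w|^{p-2}\Big(|\zeta|^{2}+(p-2)\frac{(w\cdot\zeta)^{2}}{|w|^{2}}\Big)\ge \min\{1,p-1\}\,|w|^{p-2}|\zeta|^{2},
\]
where the lower bound follows from Cauchy--Schwarz, $0\le (w\cdot\zeta)^{2}/|w|^{2}\le|\zeta|^{2}$, distinguishing $p\ge2$ (the coefficient $p-2$ is nonnegative) from $1<p<2$ (the coefficient is negative, so one retains $1+(p-2)=p-1$). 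Setting $\zeta=\eta-\xi$ reduces both inequalities to a scalar lower bound for $\int_0^1|w_t|^{p-2}\,dt$.

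For $1<p<2$ this scalar integral is controlled for free: since $|w_t|\le(1-t)|\xi|+t|\eta|\le|\eta|+|\xi|$ and the exponent $p-2$ is negative, one has $|w_t|^{p-2}\ge(|\eta|+|\xi|)^{p-2}$, hence $\int_0^1|w_t|^{p-2}\,dt\ge(|\eta|+|\xi|)^{p-2}$, and the second inequality follows with $C=1/(p-1)$. For $p\ge2$ I would instead bound $(|\eta|+|\xi|)^{p-2}\ge|\eta-\xi|^{p-2}$ and establish the geometric lower bound $\int_0^1|w_t|^{p-2}\,dt\ge c_p|\eta-\xi|^{p-2}$; alternatively, and more cleanly, one can bypass the integral by verifying the algebraic identity
\[
(V(\eta)-V(\xi))\cdot(\eta-\xi)-\tfrac12(|\eta|^{p-2}+|\xi|^{p-2})|\eta-\xi|^{2}=\tfrac12(|\eta|^{p-2}-|\xi|^{p-2})(|\eta|^{2}-|\xi|^{2})\ge 0,
\]
which holds for $p\ge2$, and then using $|\eta|^{p-2}+|\xi|^{p-2}\ge 2^{2-p}|\eta-\xi|^{p-2}$ to conclude with $C=2^{p-1}$.

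The one place that needs care---the main obstacle---is the possible vanishing of $w_t$ along the segment when $\xi$ and $\eta$ are nearly antiparallel. There $DV(w_t)$ is singular for $p<2$, so the fundamental theorem of calculus must be read as an improper integral, which converges precisely because $p-2>-1$; and for $p\ge2$ the geometric lower bound on $\int_0^1|w_t|^{p-2}\,dt$ is delicate exactly because $|w_t|$ can be small. Exploiting the negative exponent for $p<2$ and the purely algebraic route for $p\ge2$ sidesteps both difficulties, so I expect the write-up to be short once these two reductions are in place.
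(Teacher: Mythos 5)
Your proof is correct, but note that the paper offers no proof at all to compare against: Proposition~\ref{desip} is introduced with ``Now we give some well known inequalities'' and stated bare, with the reader implicitly referred to the standard literature. What you have written is the classical self-contained derivation, and it checks out in every step. The convexity argument gives the third inequality; the representation
\[
(V(\eta)-V(\xi))\cdot(\eta-\xi)=\int_0^1 DV(w_t)(\eta-\xi)\cdot(\eta-\xi)\,dt,\qquad V(w)=|w|^{p-2}w,
\]
together with the pointwise bound $DV(w)\zeta\cdot\zeta\ge\min\{1,p-1\}\,|w|^{p-2}|\zeta|^2$ reduces the monotonicity inequalities to lower bounds on $\int_0^1|w_t|^{p-2}\,dt$, and your observation that for $p<2$ the negative exponent turns the crude estimate $|w_t|\le|\eta|+|\xi|$ into exactly the needed lower bound is the key simplification; your treatment of a possible zero of $w_t$ on the segment (the integrand behaves like $|t-t_0|^{p-2}$, integrable since $p-2>-1$) is also right. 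For $p\ge2$ the algebraic identity you propose,
\[
(V(\eta)-V(\xi))\cdot(\eta-\xi)=\tfrac12\bigl(|\eta|^{p-2}+|\xi|^{p-2}\bigr)|\eta-\xi|^2+\tfrac12\bigl(|\eta|^{p-2}-|\xi|^{p-2}\bigr)\bigl(|\eta|^2-|\xi|^2\bigr),
\]
is verified by direct expansion, the last term is nonnegative since $t\mapsto t^{p-2}$ is nondecreasing, and $|\eta|^{p-2}+|\xi|^{p-2}\ge\bigl(\max\{|\eta|,|\xi|\}\bigr)^{p-2}\ge 2^{2-p}|\eta-\xi|^{p-2}$ finishes the case cleanly, making the only sketchy part of your write-up (the ``geometric lower bound'' on the integral) unnecessary. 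What your route buys over the paper's citation-free statement is explicit constants: in particular $C=1/(p(x)-1)$ in the second inequality, which makes visible that the inequality degenerates (and indeed fails, e.g.\ for parallel $\eta,\xi$ of different moduli) as $p(x)\to1$, so the uniformity of $C$ in $x$ rests on the paper's standing assumption $p_1>1$ together with compactness of $[p_1,p_2]$, exactly as you say.
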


The following properties will be used throughout the paper.

\begin{prop}\label{debil}
	Let $F_n,F\in\lp.$
	\begin{enumerate}
	\item If
		$$
			F_n\rightharpoonup F \mbox{ weakly in } \lp
		$$
		then
		$$
			\int_{\O}|F|^{p(x)}\, dx \leq 
			\liminf_{n\to\infty}
			\int_{\O}|F_n|^{p(x)}\, dx.
		$$
	\item If
 		$$ 
			F_n \rightarrow F \mbox{ strongly in } \lp
		$$ 
		then
 		$$
			\int_\O |F_n|^{p(x)}\, dx \to \int_\O 
			|F|^{p(x)}\, dx.
		$$
	\item If	 
		$$
			F_n\rightharpoonup F \mbox{ weakly in } \lp 
			\quad \mbox{and}\quad\int_\O |F_n|^{p(x)}\, 
			dx \to \int_\O |F|^{p(x)}\, dx
		$$
		then
 		$$ 
 			F_n \rightarrow F \mbox{ strongly in } \lp.
 		$$
	\end{enumerate}
\end{prop}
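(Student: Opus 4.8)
The plan is to establish the three parts through the convexity and the norm-continuity of the modular $\varrho_{p(\cdot)}(v)=\int_\O|v|^{p(x)}\,dx$, proving them in the order (2), (1), (3).

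For \emph{(2)}, I would start from the elementary scalar inequality $\big||a|^{s}-|b|^{s}\big|\le s\,|a-b|\,(|a|^{s-1}+|b|^{s-1})$, valid for $s\ge1$ by the mean value theorem. Applying it pointwise with $s=p(x)$, $a=F_n(x)$, $b=F(x)$ and integrating gives
\[
\Big|\int_\O|F_n|^{p(x)}\,dx-\int_\O|F|^{p(x)}\,dx\Big|\le p_2\int_\O|F_n-F|\big(|F_n|^{p(x)-1}+|F|^{p(x)-1}\big)\,dx.
\]
I then apply the generalized H\"older inequality with exponents $p(\cdot)$ and $p'(\cdot)$. Since $(p(x)-1)p'(x)=p(x)$, the functions $|F_n|^{p(\cdot)-1}$ and $|F|^{p(\cdot)-1}$ belong to $\lpp$ with modulars equal to $\int_\O|F_n|^{p(x)}\,dx$ and $\int_\O|F|^{p(x)}\,dx$, which are bounded because a strongly convergent sequence is bounded; hence their $\lpp$-norms stay bounded by Proposition \ref{equi}. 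As $\|F_n-F\|_{p(\cdot)}\to0$ by hypothesis, the right-hand side tends to $0$, which proves (2).

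For \emph{(1)}, note that for each fixed $x$ the map $t\mapsto|t|^{p(x)}$ is convex because $p(x)\ge1$, so $\varrho_{p(\cdot)}$ is a convex functional on $\lp$; by part (2) it is strongly continuous, in particular strongly lower semicontinuous. Its epigraph is therefore convex and strongly closed, hence weakly closed by Mazur's lemma, so $\varrho_{p(\cdot)}$ is weakly lower semicontinuous. Applied to $F_n\rightharpoonup F$ this is exactly the asserted inequality.

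The substance lies in \emph{(3)}. Write $G_n\coloneqq\tfrac12(F_n+F)$; since $G_n\rightharpoonup F$ weakly, part (1) gives $\int_\O|F|^{p(x)}\,dx\le\liminf_n\int_\O|G_n|^{p(x)}\,dx$, while pointwise convexity gives $\int_\O|G_n|^{p(x)}\,dx\le\tfrac12\int_\O|F_n|^{p(x)}\,dx+\tfrac12\int_\O|F|^{p(x)}\,dx$, whose right-hand side converges to $\int_\O|F|^{p(x)}\,dx$ by hypothesis. Hence the convexity defect $d_n\coloneqq\tfrac12\int_\O|F_n|^{p(x)}\,dx+\tfrac12\int_\O|F|^{p(x)}\,dx-\int_\O|G_n|^{p(x)}\,dx$ is nonnegative and $d_n\to0$. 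The goal $\int_\O|F_n-F|^{p(x)}\,dx\to0$ — equivalent to strong convergence by Proposition \ref{equi}(4) — then follows from the uniform convexity of the modular: a pointwise Clarkson-type inequality bounds $\big|\tfrac{a-b}2\big|^{p(x)}$ in terms of the pointwise defect $\tfrac12|a|^{p(x)}+\tfrac12|b|^{p(x)}-\big|\tfrac{a+b}2\big|^{p(x)}$, and integrating bounds $\int_\O\big|\tfrac{F_n-F}2\big|^{p(x)}\,dx$ by a quantity vanishing with $d_n$. For $p(x)\ge2$ this is the first Clarkson inequality directly; for $1<p(x)<2$ the required superquadratic control is supplied by the strong-convexity estimate of Proposition \ref{desip}. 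I expect this last conversion to be the main obstacle: in the subquadratic range the defect controls $|a-b|^{p}$ only after multiplication by the weight $(|a|+|b|)^{p-2}$, so one must split the domain by the relative sizes of $|F_n|$ and $|F|$ and recombine through H\"older with the $x$-dependent exponents $\tfrac{2}{p(\cdot)}$ and $\tfrac{2}{2-p(\cdot)}$, keeping every constant independent of $x\in\overline{\O}$.
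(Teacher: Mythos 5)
Your proposal is correct, but it takes a genuinely different route from the paper, which in fact contains no argument at all: the paper's ``proof'' of Proposition \ref{debil} consists of citing \cite[Theorem 3.9 and Lemma 2.4.17]{DHHR} for (1) and (3), and \cite[Proposition 2.3]{FZ} for (2). What your route buys is a self-contained proof relying only on Propositions \ref{equi} and \ref{desip} and standard convexity arguments, at the cost of the paper's brevity. Your parts (2) (mean value theorem plus variable-exponent H\"older, using $(p(x)-1)p'(x)=p(x)$ to identify the modulars) and (1) (convex $+$ strongly continuous $\Rightarrow$ weakly sequentially lower semicontinuous, via Mazur) are complete as written. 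In part (3), two points still need to be written out to close the argument. First, Proposition \ref{desip} is a strong monotonicity inequality for the map $\eta\mapsto|\eta|^{p-2}\eta$, not a bound on the midpoint convexity defect, so the pointwise estimate you invoke needs a bridge: for fixed $x$ (writing $p=p(x)$), with $m=\frac{a+b}{2}$ and $v=\frac{a-b}{2}$ one has the identity
\begin{equation*}
	\tfrac12|a|^{p}+\tfrac12|b|^{p}-|m|^{p}
	=\frac{p}{2}\int_0^1\bigl(|m+tv|^{p-2}(m+tv)-|m-tv|^{p-2}(m-tv)\bigr)\,v\,dt,
\end{equation*}
and applying the second inequality of Proposition \ref{desip} under the integral sign (together with $|m\pm tv|\le|a|+|b|$ and $p-2<0$) shows that the pointwise defect dominates $c\,|a-b|^{2}(|a|+|b|)^{p-2}$ with $c$ uniform in $x$, since $p(x)$ ranges in a compact subinterval of $(1,2]$ there. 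Second, the recombination must indeed be done, as you indicate, with H\"older in \emph{norm} form for the conjugate variable exponents $2/p(\cdot)$ and $2/(2-p(\cdot))$, followed by Proposition \ref{equi}: the first factor has modular $\int_{\O}|F_n-F|^{2}(|F_n|+|F|)^{p(x)-2}\,dx\to0$ and the second has modular $\int_{\O}(|F_n|+|F|)^{p(x)}\,dx$ bounded; a pointwise Young inequality with a fixed small parameter would fail near the set where $p(x)=2$, because $2/(2-p(x))$ blows up there, so your choice of tool is the right one. With these two details supplied, and Clarkson's first inequality on the set where $p(x)\ge2$, your argument is complete.
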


\begin{proof}
For the proof of (1) and (3) see \cite[Theorem 3.9 and 
Lemma 2.4.17]{DHHR}. Finally (2) follows by 
\cite[Proposition 2.3]{FZ}.
\end{proof}

Let $W^{1,p(\cdot)}(\Omega)$ denote the space of measurable
functions $u$ such that, $u $ and the distributional derivative
$\nabla u$ are in $L^{p(\cdot)}(\Omega)$. The norm
$$
	\|u\|_{W^{1,p(\cdot)}(\O)}=\|u\|_{1,p(\cdot)}\coloneqq 
	\|u\|_{p(\cdot)} + \| |\nabla u|\|_{p(\cdot)}
$$
makes $W^{1,p(\cdot)}(\Omega)$ a Banach space.

We define the space $W_0^{1,p(\cdot)}(\Omega)$ as the closure of
$C_0^{\infty}(\Omega)$ in $W^{1,p(\cdot)}(\Omega)$. 

\medskip

We now introduce the most important condition on the exponent
in the study of variable exponent spaces, the $\log$-H\"older 
continuity condition.

\begin{defi}
	We say that a function  $\alpha:\overline{\O}\to\R$ is $\log$-H\"{o}lder continuous if 
	there exists a constant $C_{log}$ such that
	$$
		|\alpha(x) - \alpha(y)| \leq 
		\frac{C_{log}}{\log\,
		\left(e+\frac{1}{|x - y|}\right)}
		\quad \forall\, x,y\in\overline{\O}.
	$$	
\end{defi}

For example, it was proved
in  \cite[Theorem 3.7]{D}, that if one assumes that $\partial\O$
is Lipschitz and   $p:\overline{\O}\to[1,+\infty)$ is $\log$-H\"{o}lder continuous then
$C^{\infty}(\bar{\Omega})$ is dense in $W^{1,p(\cdot)}(\Omega).$
See also  \cite{Di,DHN, Fanx2,KR,Sam1}. 

\medskip

We now state two Sobolev embedding Theorems.
Here, $p^*$ and $p_*$ are the Sobolev critical exponents for these
spaces, i.e.
\begin{equation*}
	%\label{criticos}
	p^*(x)\coloneqq
		\begin{cases}
			\dfrac{p(x)N}{N-p(x)} & \mbox{if } p(x)<N,\\
			+\infty & \mbox{if } p(x)\ge N,
		\end{cases}
	\quad \mbox{ and }
	\quad p_*(x)\coloneqq
		\begin{cases}
			\dfrac{p(x)(N-1)}{N-p(x)} & \mbox{if } p(x)<N,\\
			+\infty & \mbox{if } p(x)\ge N.
		\end{cases}
\end{equation*}

For the proofs of the following theorems see
\cite{D3} and \cite[Corollary 2.4]{Fanx}, respectively.

\begin{teo}\label{embed}
	Let $\Omega$ be a Lipschitz domain. Let 
	$p:\overline{\O}\to [1,\infty)$
	be a $\log$-H\"{o}lder continuous function. 
	Then the embedding
	$\wp\hookrightarrow \lpe$ is continuous.
\end{teo}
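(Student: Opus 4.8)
The plan is to prove the quantitative estimate $\|u\|_{\lpe}\le C\|u\|_{\wp}$ for every $u\in\wp$, which is equivalent to continuity of the embedding. Since $\partial\O$ is Lipschitz and $p$ is $\log$-H\"older continuous, the space $C^\infty(\overline{\O})$ is dense in $\wp$ by the density result quoted above (\cite[Theorem 3.7]{D}), so it suffices to establish the inequality for $u\in C^\infty(\overline{\O})$ and then pass to the limit.

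Next I would transfer the problem to the whole space. Using that $\O$ is Lipschitz, extend $p$ to a $\log$-H\"older continuous exponent $\tilde p:\R^N\to[p_1,p_2]$ preserving the bounds and (up to a fixed factor) the $\log$-H\"older constant, and extend $u$ to a compactly supported $\tilde u\in W^{1,\tilde p(\cdot)}(\R^N)$ via a bounded extension operator, so that $\|\tilde u\|_{W^{1,\tilde p(\cdot)}(\R^N)}\le C\|u\|_{\wp}$ and $\tilde p\big|_\O=p$, hence $\tilde p^*\big|_\O=p^*$. Because restricting the domain decreases the Luxemburg norm, it then suffices to prove the whole-space inequality $\|\tilde u\|_{L^{\tilde p^*(\cdot)}(\R^N)}\le C\|\tilde u\|_{W^{1,\tilde p(\cdot)}(\R^N)}$.

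The heart of the argument is a potential estimate. For smooth compactly supported $\tilde u$ the classical pointwise bound $|\tilde u(x)|\le c_N\,I_1(|\nabla \tilde u|)(x)$ holds, where $I_1 f(x)=\int_{\R^N}\frac{f(y)}{|x-y|^{N-1}}\,dy$ is the Riesz potential of order one, so it remains to show that $I_1$ maps $L^{\tilde p(\cdot)}(\R^N)$ boundedly into $L^{\tilde p^*(\cdot)}(\R^N)$. I would deduce this from a Hedberg-type pointwise inequality $I_1 f(x)\le C\,(Mf(x))^{\tilde p(x)/\tilde p^*(x)}\,\|f\|_{\tilde p(\cdot)}^{1-\tilde p(x)/\tilde p^*(x)}$, where $M$ is the Hardy-Littlewood maximal operator and $\tilde p(x)/\tilde p^*(x)=1-\tilde p(x)/N$; the $\log$-H\"older continuity of $\tilde p$ is precisely what allows one to freeze the exponent on dyadic annuli around $x$ and thereby produce this power. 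Raising $|I_1 f|$ to the power $\tilde p^*(x)$, integrating, and using $(\tilde p(x)/\tilde p^*(x))\,\tilde p^*(x)=\tilde p(x)$ reduces everything to the modular bound $\int_{\R^N}(Mf)^{\tilde p(x)}\,dx\le C\int_{\R^N}|f|^{\tilde p(x)}\,dx$; combining this with the norm--modular relations of Proposition \ref{equi} yields the claim.

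The main obstacle is exactly this last step: the boundedness of the maximal operator (equivalently of $I_1$) on variable exponent spaces, which is Diening's theorem and rests essentially on the $\log$-H\"older hypothesis to control the oscillation of $\tilde p(\cdot)$ across the scales entering $I_1$. Two points require care. First, Diening's bound needs $p_1>1$, so the borderline value $p_1=1$ (admitted by the statement but not occurring in our application, where $p_1>1$) must be treated separately through the classical $W^{1,1}\hookrightarrow L^{N/(N-1)}$ inequality. Second, the near-critical region where $\tilde p(x)$ approaches $N$ makes $\tilde p^*(x)$ blow up; I would handle this by splitting $\O$ into the set where $p<N$, on which the above potential argument applies, and the set where $p\ge N$, on which one reduces to the subcritical embedding of Theorem \ref{imb}.
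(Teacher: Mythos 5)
The first thing to say is that the paper contains no proof of Theorem \ref{embed}: it is a quoted preliminary, with the proof attributed to \cite{D3} (and the companion trace embedding, Theorem \ref{traza}, to \cite{Fanx}). So there is no in-paper argument to compare against; your outline is a reconstruction of the proof in the cited literature, and its skeleton --- density of smooth functions, extension of $u$ and of the $\log$-H\"older exponent to $\R^N$, the pointwise bound $|u|\le c_N I_1(|\nabla u|)$, a Hedberg-type inequality obtained by freezing the exponent across dyadic annuli, and the boundedness of the maximal operator on variable exponent spaces --- is indeed the standard (Diening) route, modulo the fact that the variable-exponent extension theorem you invoke is itself a substantial result that must be cited, not assumed.

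There is, however, a step that is false as stated. The ``modular bound'' $\int_{\R^N}(Mf)^{\tilde p(x)}\,dx\le C\int_{\R^N}|f|^{\tilde p(x)}\,dx$ to which you reduce the problem fails for every non-constant continuous exponent: modulars are not homogeneous, and if $B$ is a small fixed ball centered at a point where $\tilde p$ attains its minimum $p_-$ and $f=t\chi_B$, then as $t\to\infty$ the right-hand side grows like $t^{p_-+o(1)}$, while the left-hand side grows at least like $t^{p_-+\delta}$ on any region at unit distance where $\tilde p\ge p_-+\delta$; in fact the validity of such a modular inequality forces the exponent to be constant (Lerner). What is true, and what the argument needs, is the \emph{norm} inequality $\|Mf\|_{L^{\tilde p(\cdot)}(\R^N)}\le C\|f\|_{L^{\tilde p(\cdot)}(\R^N)}$ (Diening's theorem, requiring $p_1>1$), applied after normalizing $\|f\|_{\tilde p(\cdot)}\le 1$ so that the Hedberg factor $\|f\|^{1-\tilde p(x)/\tilde p^*(x)}$ is at most $1$; Proposition \ref{equi} then converts the norm bound on $Mf$ into a bound of its modular by a constant, and a second application of Proposition \ref{equi} turns $\int|I_1f|^{\tilde p^*(x)}\,dx\le C$ into the desired norm bound. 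This is a standard repair, but the statement you reduce to must be replaced by it.

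The second genuine gap is the critical/supercritical region. On $\{p\ge N\}$ the paper's definition gives $p^*=+\infty$, so the claim there is an $L^\infty$ bound, and Theorem \ref{imb}, which embeds into \emph{smaller} exponents, can never produce it; moreover, splitting $\O$ into the measurable sets $\{p<N\}$ and $\{p\ge N\}$ destroys the domain structure on which your extension and potential estimates rely, and even on $\{p<N\}$ the factors $C^{\tilde p^*(x)}$ created when Hedberg's inequality is raised to the power $\tilde p^*(x)$ are uniformly bounded only if $\sup_{\O}p<N$. This regime needs Morrey-type estimates when $p>N$, and at the borderline $p\equiv N$ ($N\ge2$) the asserted embedding into $L^\infty$ is simply false (e.g.\ iterated-logarithm functions), which shows that the theorem must be read with the restriction of the cited reference ($\sup_\O p<N$) or with a separate treatment, and exclusion of the borderline, for the supercritical set.
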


\begin{teo}\label{traza}
	Let $\Omega$ be a bounded  Lipschitz domain.
	Suppose that $p\in C^0(\overline{\O})$ with $p_1>1$. 
	If $r\in C^0(\partial\O)$ satisfies the condition 
	$1\leq r(x)<p_* (x)$ for all $x\in\partial\O,$ then there is 
	a compact boundary trace embedding 
	$\wp\hookrightarrow L^{r(\cdot)}(\partial\O)$.
\end{teo}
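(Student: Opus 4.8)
The plan is to establish compactness directly, by localising and freezing the exponent so that on each small piece the problem reduces to the classical constant-exponent compact trace theorem; the continuity of $p$ together with the compactness of $\overline{\O}$ is exactly what allows this freezing. Concretely, let $\{u_n\}$ be a bounded sequence in $\wp$; I must produce a subsequence whose traces converge in $\lrb$. First I would cover $\overline{\O}$ by finitely many balls $U_1,\dots,U_m$ on each of which the oscillation of $p$ is at most a small $\delta>0$, which is possible by uniform continuity on the compact set $\overline{\O}$, and fix a subordinate partition of unity $\{\phi_i\}$, so that $u_n|_{\partial\O}=\sum_i(\phi_i u_n)|_{\partial\O}$. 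It then suffices to treat each term $\phi_i u_n$, which is supported in $U_i$.

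On the chart $U_i$ set $\underline{p}_i:=\inf_{U_i}p$ and $\bar r_i:=\sup_{\partial\O\cap U_i}r$. Since $\underline{p}_i\le p(x)$ and $U_i$ has finite measure, Theorem \ref{imb} gives a continuous embedding $\lp\hookrightarrow L^{\underline{p}_i}(U_i)$, so $\{\phi_i u_n\}$ is bounded in the \emph{constant}-exponent space $W^{1,\underline{p}_i}(\O\cap U_i)$. The key point is that the global subcritical condition $r(x)<p_*(x)$ can be upgraded to a constant-exponent subcritical condition $\bar r_i<(\underline{p}_i)_*$ on every chart: since $p\mapsto p_*$ is continuous and increasing where finite (with $p_*=+\infty$ once $p\ge N$), since $r<p_*$ strictly, and since $p,r$ are continuous on the compact boundary, one can choose $\delta$ small enough, uniformly in $i$, to guarantee this gap.

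Once the gap $\bar r_i<(\underline{p}_i)_*$ is secured, I would flatten the Lipschitz boundary chart and invoke the classical compact trace embedding $W^{1,\underline{p}_i}\hookrightarrow\hookrightarrow L^{\bar r_i}(\partial\O\cap U_i)$ to extract, after passing to a subsequence, a strongly convergent sequence $\{\phi_i u_n\}$ in $L^{\bar r_i}(\partial\O\cap U_i)$. Because $r(x)\le\bar r_i$ and the boundary patch has finite surface measure, the boundary analogue of Theorem \ref{imb} gives $L^{\bar r_i}(\partial\O\cap U_i)\hookrightarrow L^{r(\cdot)}(\partial\O\cap U_i)$, so this convergence persists in the variable-exponent norm. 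A finite diagonal extraction over $i=1,\dots,m$ and summation against the partition of unity then produce a subsequence of $\{u_n\}$ converging in $\lrb$, which is the asserted compactness.

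The main obstacle is entirely contained in the second paragraph: verifying that the strict but only pointwise gap $r(x)<p_*(x)$ becomes a uniform constant-exponent gap $\sup_U r<(\inf_U p)_*$ on all sufficiently small charts. This rests on the compactness of $\partial\O$ and the uniform continuity of $p$ and $r$, and needs a little care in the transition region where $p$ crosses the dimension $N$ and $p_*$ jumps to $+\infty$; there the local constant critical exponent is already infinite, so any finite $\bar r_i$ is admissible, and the difficulty disappears. The flattening of the charts and the verification that $\phi_i u_n$ sits in an admissible domain for the classical theorem are routine once this gap is in hand.
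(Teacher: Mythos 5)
The paper itself does not prove Theorem \ref{traza}; it is quoted from the literature, with the proof attributed to \cite[Corollary 2.4]{Fanx}. Your localization-and-freezing argument is exactly the standard technique behind that cited result: cover $\overline{\Omega}$ by finitely many small patches on which $p$ and $r$ oscillate little, pass to the classical constant-exponent compact trace theorem on each patch, and recombine via a partition of unity and finitely many successive extractions. The overall structure is correct, and the step you isolate as the main obstacle --- upgrading the pointwise strict inequality $r(x)<p_*(x)$ to a uniform constant-exponent gap $\bar r_i<(\underline{p}_i)_*$ on all sufficiently small patches --- is indeed the heart of the matter; it does hold, by compactness of $\partial\Omega$ and continuity of $p$ and $r$.

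Two details need repair, though neither is fatal. First, your treatment of the transition region is misstated: if a patch $U_i$ meets both $\{p<N\}$ and $\{p\ge N\}$, then the frozen exponent $\underline{p}_i=\inf_{U_i}p$ is strictly below $N$, so $(\underline{p}_i)_*$ is finite, not ``already infinite.'' The correct one-line repair is quantitative: on such patches $\underline{p}_i\ge N-\omega_p(2\delta)$, where $\omega_p$ is the modulus of continuity of $p$, hence $(\underline{p}_i)_*\ge\bigl(N-\omega_p(2\delta)\bigr)_*\to+\infty$ uniformly as $\delta\to0$, so for small $\delta$ it dominates the fixed finite bound $\max_{\partial\Omega}r\ge\bar r_i$; alternatively, a single compactness-and-contradiction argument (sequences $\delta_k\to0$, $x_k\to x^*$) handles the cases $p(x^*)<N$ and $p(x^*)\ge N$ at once. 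Second, the intersection $\Omega\cap U_i$ of a Lipschitz domain with a ball need not itself be Lipschitz, so invoking the classical compact trace theorem on $\Omega\cap U_i$ after flattening hides a chart-selection issue; it is cleaner to observe that $\phi_i u_n$, extended by zero, lies in $W^{1,\underline{p}_i}(\Omega)$ with norm bounded uniformly in $n$, and to apply the classical compact trace embedding on the whole bounded Lipschitz domain $\Omega$ into $L^{\bar r_i}(\partial\Omega)$, which requires precisely the gap $\bar r_i<(\underline{p}_i)_*$ you established (here the hypothesis $p_1>1$ guarantees $\underline{p}_i>1$, which the classical compactness needs). With these adjustments your proof is complete and coincides in substance with the cited one.
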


%%%%%%%%%%%%%%%%%%%%%%%%%%%%%%%%%%%%%%%%%%%%%%%%%%%%%%%%%%%%%%%%%%%
\subsection{The mesh $\th$ and properties of $\wpt$}
\label{appA2} We now give some definitions and properties related to
the mesh and to the broken Sobolev space.

\begin{hyp}\label{omega}
	Let $\O$ be a bounded polygonal domain and $(\th)_{h\in(0,1]}$
	be a family of partitions of $\overline{\O}$ into polyhedral
	elements. We assume that there exists a finite number of reference
	polyhedral $\hat{\kappa}_1,\dots,\hat{\kappa}_r$ such that for all
	$\kappa\in \th$ there exists an invertible affine map $F_{\kappa}$
	such that, $\kappa=F_{\kappa}(\hat{\kappa}_i)$. We assume that
	each $\kappa\in\th$ is closed and that $diam(\kappa)\leq h$ for
	all $\kappa\in \th$.
\end{hyp}

Now we give some notation,
\begin{align*}
	\mathcal{E}_h&\coloneqq\{\kappa\cap\kappa'\colon \dim_H(\kappa\cap\kappa')=N-1\}
	\cup\{\kappa\cap\partial\O\colon \dim_H(\kappa\cap\partial\O)=N-1\},\\
	\Gamma_{int}&\coloneqq\bigcup\{ e\in\eh	\coloneqq \dim_H(e\cap\partial\O)<N-1\},
\end{align*}
where $\dim_H$ is the Hausdorff dimension.

\medskip

We also assume that the mesh satisfies the following hypotheses.

\begin{hyp}\label{mesh}  
	The family of partitions $(\th)_{h\in(0,1]}$
    	satisfies the Hypothesis \ref{omega} and
	\begin{enumerate}
		\item[(a)] There exist positive constants  $C_1$ 
			and $C_2$,independent of $h$, such that for each 
			element $\kappa\in\th$
			$$
				C_1h_{\kappa}^N\le|\kappa|\le C_2 
				h_\kappa^N.
			$$
		\item[(b)]There exists a constant $C_1>0$ such that 
			for all 
			$h\in(0,1]$ and for all face $e\in\eh$ there 
			exists a point $x_e\in e$ and a radius
    		$\rho_e\geq C_1 \diam(e)$ such that 
    		$B_{\rho_e}(x_e)\cap A_e\subset e$, where
  		 	$A_e$ is the affine hyperplane spanned by $e$.
    		Moreover, there are positive constants such that
        	$$
				ch_{\kappa}\leq h_e \leq C h_{\kappa}, 
				\quad c h_{\kappa'}\leq h_e\leq C 
				h_{\kappa'}
			$$
        	where $e=\kappa\cap\kappa'$.
	\end{enumerate}

\end{hyp}

\medskip

Now, we introduce  the finite element spaces associated with
$\th.$ We define the variable broken Sobolev space as
$$
	W^{1,p(\cdot)}(\th)\coloneqq\{u\in L^1(\Omega)
	\colon u|_{\kappa}\in
	W^{1,p(\cdot)}(\kappa) \mbox{ for all } \kappa \in\th\},
$$
and the subspaces
\begin{align*}
	U^k(\th)\coloneqq &
	\{u\in C(\O)\colon u|_{\kappa}\in P^k 
	\mbox{ for all }\kappa \in\th\},\\
	S^k(\th)\coloneqq&\{u\in L^1(\Omega)\colon 
	u|_{\kappa}\in P^k \mbox{ for all } \kappa \in\th\},
\end{align*}
where $P^k$ is the space of polynomials functions of degree at most $k.$

\medskip

For each face $e\in\mathcal{E}_h,$  $e\subset\Gamma_{int}$ 
we denote by $\kappa^+$ and $\kappa^-$ its neighboring 
elements. We write $\nu^+,\nu^-$ to denote the outward 
normal unit vectors to the boundaries 
$\partial\kappa^{\pm}$, respectively. The jump of a
function $u\in\wpt$ and the average of a vector-valued 
function
$\phi\in(\wpt)^N,$ with traces $u^{\pm},$ $\phi^{\pm}$ from
$k^{\pm}$ are, respectively, defined as the vectors
\begin{equation*}
	\[u\]\coloneqq u^+ \nu^++u^- \nu^- \quad
	\mbox{ and }\quad
    \{\phi\}\coloneqq\frac{\phi^++ \phi^-}{2}.
\end{equation*}

Let ${\bf h}\colon\partial\Omega\cup \Gamma_{int}\to\R$ a piecewise
constant function define by
$$
	{\bf h}(x)\coloneqq\diam(e)  \quad \mbox{ if }  x\in e,
$$
where  $e\in \mathcal{E}_h$.

We consider the following seminorm on $W^{1,p(\cdot)}(\th)$,
\begin{equation*}
	|u|_{W^{1,p(\cdot)}(\th)}\coloneqq
	\|\nabla u\|_{\lp}+ \|\[u\]
	{\bf{h}}^{\frac{-1}{p'(x)}}
	\|_{L^{p(\cdot)}(\Gamma_{int})}.
\end{equation*}

%%%%%%%%%%%%%%%%%%%%%%%%%%%%%%%%%%%%%%%%%%%%%%%%%%%%%%%%%%%%%%%
\subsection{The lifting operator}\label{lifop}
Finally we define, as in \cite{BO} (see also
\cite{ABCM}), the lifting operator.
\begin{defi}\label{lifting}
	Let $l\ge0$ and
 	$R_h\colon\wpt\to S^l(\th)^N$ defined as,
 	\begin{equation*}
		\int_{\O} \langle R_h(u), \phi\rangle\, dx \coloneqq
		-\int_{\Gamma_{int}} \langle \[u\], \{\phi\} 
		\rangle \, dS 
	\end{equation*}
	for all $\phi\in S^l(\th)^N.$
\end{defi}

This operator appears in the first term of the discretized
functional $J_h$. As we can see from the definition, this 
operator represents the contribution of the jumps to the 
distributional gradient. This is the reason  why it is 
crucial to add  this term in order to have the consistency 
of the method.

We point out that this lifting operator was first used in
\cite{BR1997} in order to describe the contributions of  the 
jumps across the interelements of the computed solution on 
the (computed) gradient of the solution in a mixed 
formulation of
Navier-Stokes equations. It was also used in 
\cite{BMMPR2000}
where a solid mathematical background for the method 
introduced in
\cite{BR1997} was proposed.

\medskip

Now, we state a bound of the $\lp$-norm of $R_h(u)$ in terms 
of the jumps of $u$ in $\gi$. For the proof see \cite{DLM}.

\begin{lema}\label{lifcont1}
	Let $p:\overline{\O}\to\colon[1,\infty)$ be a $\log$-
	H\"{o}lder continuous in
	$\O$. Then, there exists a constant $C$ such that,
	\begin{equation*}
		\|R_h(u)\|_{\lp}\leq C\|{\bf{h}}^{-1/p'(x)}\[u\]
		\|_{\lpi}
		\quad \forall u\in\wpt\quad \forall h\in (0,1].
	\end{equation*}
\end{lema}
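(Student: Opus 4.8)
The plan is to estimate $\|R_h(u)\|_{\lp}$ by duality together with the defining identity for $R_h$. Since $R_h(u) \in S^l(\th)^N \subset \lp$ and the latter is reflexive (Theorem \ref{ref}, using $p_1 > 1$), I would test the defining relation against a suitable vector-valued function. The key point is that the definition
\begin{equation*}
	\int_{\O} \langle R_h(u), \phi\rangle\, dx = -\int_{\Gamma_{int}} \langle \[u\], \{\phi\} \rangle \, dS
\end{equation*}
holds for all $\phi \in S^l(\th)^N$, so the strategy is to bound the right-hand side by $\|{\bf h}^{-1/p'(x)}\[u\]\|_{\lpi}$ times something controlled by $\|\phi\|_{\lpp}$, and then to recover the left-hand side as the $\lp$-norm of $R_h(u)$ via the norm-duality characterization of $L^{p(\cdot)}$.

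First I would fix $\phi \in S^l(\th)^N$ and apply a generalized Hölder inequality on $\gi$ to the right-hand side, splitting the integrand as $\langle {\bf h}^{-1/p'(x)}\[u\], {\bf h}^{1/p'(x)}\{\phi\}\rangle$, which gives a bound by $C\,\|{\bf h}^{-1/p'(x)}\[u\]\|_{\lpi}\,\|{\bf h}^{1/p'(x)}\{\phi\}\|_{\lppi}$. The crucial local step is then a trace/inverse estimate: since $\phi$ is a polynomial of degree at most $l$ on each element, its trace on an edge $e = \kappa\cap\kappa'$ is controlled by its bulk values, and after incorporating the factor ${\bf h}^{1/p'(x)} = \diam(e)^{1/p'(x)}$ one should obtain, using Hypothesis \ref{mesh} (the relations $c h_\kappa \le h_e \le C h_\kappa$ and $C_1 h_\kappa^N \le |\kappa| \le C_2 h_\kappa^N$), a scaled trace inequality of the form $\|{\bf h}^{1/p'(x)}\{\phi\}\|_{\lppi} \le C \|\phi\|_{\lpp}$. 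This scaling is exactly why the power $-1/p'(x)$ on ${\bf h}$ is the correct weight: the factor $\diam(e)^{1/p'(x)}$ compensates for the loss of one power of the mesh size that a polynomial trace inequality costs.

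Once this is established, for each test function $\phi$ one has
\begin{equation*}
	\left| \int_{\O} \langle R_h(u), \phi\rangle\, dx \right| \le C\,\|{\bf h}^{-1/p'(x)}\[u\]\|_{\lpi}\,\|\phi\|_{\lpp},
\end{equation*}
and taking the supremum over $\phi$ with $\|\phi\|_{\lpp} \le 1$ yields the claimed bound, provided the supremum over the finite-dimensional test space $S^l(\th)^N$ captures the full $\lp$-norm of $R_h(u)$. The cleanest way to guarantee this is to choose $\phi = |R_h(u)|^{p(x)-2} R_h(u)$ as the test function: it is not a polynomial, so the honest approach is instead to use that $R_h(u) \in S^l(\th)^N$ and argue that the dual norm over all of $\lpp$ agrees with $\|R_h(u)\|_{\lp}$ up to the log-Hölder-dependent constants in the norm-conjugate duality. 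The main obstacle I anticipate is precisely this trace-scaling estimate with a variable exponent: tracking how $\diam(e)^{1/p'(x)}$ interacts with the variable power under the affine map $F_\kappa$ requires the log-Hölder continuity of $p$ to keep the constants uniform in $h$ (so that $\diam(e)^{p(x)-p(y)}$ stays bounded for $x,y$ on a single element), and care must be taken that the reference-element trace inequality transfers with constants depending only on the finite family $\hat\kappa_1,\dots,\hat\kappa_r$ and not on $\kappa$ itself.
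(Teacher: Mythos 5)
Your outline assembles the right ingredients, and they are the same ones used in the proof the paper points to (the lemma is not proved here; it is deferred to \cite{DLM}): the duality pairing coming from the definition of $R_h$, the generalized H\"older inequality on $\Gamma_{int}$ with the weights ${\bf{h}}^{\mp 1/p'(x)}$, and a scaled polynomial trace inequality whose $h$-uniformity rests on the $\log$-H\"older continuity of $p$. However, there is a genuine gap exactly at the point you flag and then pass over: the supremum of $\int_\O \langle R_h(u),\phi\rangle\,dx$ over $\phi\in S^l(\th)^N$ with $\|\phi\|_{\lpp}\le 1$ is a priori \emph{smaller} than the dual-norm expression for $\|R_h(u)\|_{\lp}$, because the norm-conjugate formula requires the supremum over all of $\lpp$, and there is no general principle saying that restricting a dual norm to a subspace loses only a constant --- let alone a constant independent of $h$ while the subspace $S^l(\th)^N$ changes with $h$. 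Your fallback, that one can ``argue that the dual norm over all of $\lpp$ agrees with $\|R_h(u)\|_{\lp}$ up to the log-H\"older-dependent constants in the norm-conjugate duality,'' is precisely the statement that needs proof, not a consequence of duality; and your first idea, testing with $|R_h(u)|^{p(x)-2}R_h(u)$, fails for the reason you yourself note: it does not belong to $S^l(\th)^N$, so the defining identity of $R_h$ cannot be applied to it.

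The missing ingredient is the $h$-uniform $L^{p'(\cdot)}$-stability of the elementwise $L^2$-projection $\Pi_h\colon\lpp\to S^l(\th)^N$. With it the argument closes: for an arbitrary $\psi\in\lpp$ with $\|\psi\|_{\lpp}\le 1$, since $R_h(u)\in S^l(\th)^N$ one has $\int_\O\langle R_h(u),\psi\rangle\,dx = \int_\O\langle R_h(u),\Pi_h\psi\rangle\,dx = -\int_{\gi}\langle\[u\],\{\Pi_h\psi\}\rangle\,dS$, then your H\"older and scaled trace steps applied to $\Pi_h\psi$ give the bound $C\,\|{\bf{h}}^{-1/p'(x)}\[u\]\|_{\lpi}\,\|\Pi_h\psi\|_{\lpp}$, and one concludes with $\|\Pi_h\psi\|_{\lpp}\le C\|\psi\|_{\lpp}$. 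This stability is itself a nontrivial lemma in the variable-exponent setting --- it is where the $\log$-H\"older condition enters a second time, by freezing the exponent on each element and controlling factors like $h_\kappa^{p(x)-p(y)}$ under the affine maps $F_\kappa$ --- and it (or an equivalent local substitute, such as elementwise inverse estimates converting the $L^2$-optimal bound for the local liftings into an $L^{p(\cdot)}$ bound) must be proved or invoked. Without it your chain of estimates bounds only the restricted supremum, not $\|R_h(u)\|_{\lp}$.
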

%%%%%%%%%%%%%%%%%%%%%%%%%%%%%%%%%%%%%%%%%%%%%%%%%%%%%%%%%%%%
%%%%%%%%%%%%%%%%%%%%%%%%%%%%%%%%%%%%%%%%%%%%%%%%%%%%%%%%%%%
\section{Convergence of the discontinuous Galerkin FEM}\label{contg}
In this section we prove the convergence of the discontinuous Galerkin FEM.

\medskip

From now on, we make the following assumption: 
Let  $\O$ be a bounded polygonal domain in $\R^N$ and  
$p:\overline{\O}\to [p_1,2]$ ($1<p_1\leq 2$) be a $\log$-H\"{o}lder continuous function.

\medskip

Our next result follows by using Lemma \ref{lifcont1} and the fact that $L^2(\Gamma_{int})\subset \lpi$.

\begin{lema}\label{lifcont}
	There exists a constant $C$ such that
	\begin{equation*}
		\|R_h(v)\|_{\lp}\leq C\|{\bf{h}}^{-1/p'(x)}\[v\]
		\|_{L^2(\Gamma_{int})}
	\end{equation*}
	for all $v\in\wpt$ and for all $h\in(0,1].$
\end{lema}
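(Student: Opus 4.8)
The goal is to prove Lemma~\ref{lifcont}, which strengthens Lemma~\ref{lifcont1} by replacing the $\lpi$-norm of the weighted jump on the right-hand side with the (larger, since $p(\cdot)\le 2$) $L^2(\Gamma_{int})$-norm. The plan is to chain the already-established bound of Lemma~\ref{lifcont1} with a continuous embedding $L^2(\Gamma_{int})\hookrightarrow\lpi$. Since by hypothesis $p(x)\le 2$ for all $x\in\overline{\O}$, Theorem~\ref{imb} (applied on the $(N-1)$-dimensional measure space $\Gamma_{int}$, which has finite Hausdorff measure because $\Gamma_{int}$ is a finite union of faces of a polygonal domain) gives the continuous inclusion $L^{2}(\Gamma_{int})\hookrightarrow L^{p(\cdot)}(\Gamma_{int})$.

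Concretely, I would first set $w\coloneqq {\bf h}^{-1/p'(x)}\[v\]$ and apply Lemma~\ref{lifcont1} to obtain
\begin{equation*}
	\|R_h(v)\|_{\lp}\leq C_1\|w\|_{\lpi}.
\end{equation*}
Then I would invoke the embedding to bound $\|w\|_{\lpi}\leq C_2\|w\|_{L^2(\Gamma_{int})}$, where $C_2$ is the embedding constant from Theorem~\ref{imb}. Composing the two inequalities and setting $C\coloneqq C_1 C_2$ yields exactly the claimed estimate, uniformly in $v\in\wpt$ and in $h\in(0,1]$; the uniformity in $h$ is inherited directly from Lemma~\ref{lifcont1}, since the embedding constant $C_2$ depends only on $p$ and on the measure of $\Gamma_{int}$, not on the mesh parameter.

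The only point that requires a small amount of care—and which I expect to be the main (though still minor) obstacle—is verifying that the embedding constant $C_2$ is genuinely independent of $h$. The subtlety is that $\Gamma_{int}$ itself changes with the mesh, so one must check that the inclusion $L^2(\Gamma_{int})\hookrightarrow\lpi$ holds with a constant controlled uniformly over the whole family $(\th)_{h\in(0,1]}$. This follows because the embedding in Theorem~\ref{imb} on a finite-measure space has a constant depending only on the total measure of the domain and on $p_1,p_2$; since $\Gamma_{int}\subset\overline{\O}$ and the relevant $(N-1)$-dimensional Hausdorff measure of the skeleton is bounded in terms of $\O$ and the shape-regularity encoded in Hypothesis~\ref{mesh}, the constant can be taken uniform. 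Everything else is a direct concatenation, so no delicate estimates beyond this bookkeeping are needed.
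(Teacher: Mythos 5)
Your argument is the same one the paper gives --- the paper disposes of this lemma in a single sentence, ``it follows from Lemma \ref{lifcont1} and the fact that $L^2(\gi)\subset\lpi$'' --- and you correctly isolated the one point on which everything hinges: whether the constant of the embedding $\ldosg\hookrightarrow\lpi$ can be taken independent of $h$. But your resolution of that point rests on a false premise. The measure $\mathcal{H}^{N-1}(\gi)$ is \emph{not} bounded in terms of $\O$ and the shape regularity of Hypothesis \ref{mesh}; it necessarily blows up as $h\to0$. Indeed, Hypothesis \ref{mesh} gives $\mathcal{H}^{N-1}(\partial\kappa)\ge c\,h_\kappa^{N-1}\ge c\,|\kappa|/(C_2h)$ for each $\kappa\in\th$, and summing over elements yields $\mathcal{H}^{N-1}(\gi)\ge c\,|\O|/h-C$; concretely, for a uniform mesh of size $h$ on the unit square the total length of the interior edges is of order $h^{-1}$. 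Since on a set $E$ of measure $M$ with constant exponent $p<2$ the best constant in $\|g\|_{L^p(E)}\le C\|g\|_{L^2(E)}$ is exactly $M^{1/p-1/2}$ (take $g\equiv1$), the embedding constant you call $C_2$ grows like a positive power of $1/h$ wherever $p<2$. Theorem \ref{imb} hides this dependence because its constant depends on the total measure of the underlying space, which here varies with $h$. Your concatenation therefore proves only $\|R_h(v)\|_{\lp}\le C\,h^{-(1/p_1-1/2)}\|{\bf h}^{-1/p'(x)}\[v\]\|_{\ldosg}$, not the claimed $h$-uniform estimate.

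Moreover, this is not bookkeeping that a sharper argument could repair: for constant $p<2$ the asserted $h$-uniform inequality actually fails. Take $\O=(0,1)^2$, a uniform square mesh of size $h$, and $v$ piecewise constant, equal to $(-1)^j$ on vertical stripes of width two elements. The nonzero jumps sit only on the stripe interfaces, where $|\[v\]|=2$, so $\|{\bf h}^{-1/p'(x)}\[v\]\|_{\ldosg}\le C\,h^{-1/2-1/p'}$. On the other hand, testing Definition \ref{lifting} with the piecewise constant field $\phi=(s_\kappa,0)$, where $s_\kappa=\pm1$ is the same stripe pattern shifted by one column, one finds that $\langle\[v\],\{\phi\}\rangle$ has a fixed sign and modulus $2$ on every interface edge, whence
\begin{equation*}
	\Bigl|\int_{\O}\langle R_h(v),\phi\rangle\,dx\Bigr|
	=\Bigl|\int_{\gi}\langle\[v\],\{\phi\}\rangle\,dS\Bigr|\ge c\,h^{-1},
	\qquad \|\phi\|_{L^{p'}(\O)}=1,
\end{equation*}
so $\|R_h(v)\|_{\lp}\ge c\,h^{-1}$ for every lifting degree $l\ge0$. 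The ratio of left- to right-hand side of the lemma is thus of order $h^{1/2-1/p}\to\infty$ as $h\to0$. So an $h$-independent constant is available only where $p\equiv2$, and the gap you flagged cannot be closed by any argument; note that the paper's own one-line justification is silent on precisely this point, even though the lemma is invoked later (in Theorem \ref{cm} and in the proof of Theorem \ref{conv}) exactly in its $h$-uniform form.
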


Now, we prove the coercivity of the functional.

\begin{teo}\label{cm}
	For each $h\in(0,1],$ let  $v_h\in \wpt$. 
	If there exists a constant $C$ independent of $h$ 
	such that
	${J}_h(v_h)\le C$ for all $h\in (0,1]$,
	then
	$$
		\sup_{h\in(0,1]}\left(\|v_h\|_{L^1(\Omega)}
		+|v_h|_{\wpt}\right)<\infty.
	$$
	Moreover,
	$$
		\sup_{h\in(0,1]}\int_{\partial\O}|v_h-u_D|^{p(x)}
		{\bf{h}}^{1-p(x)}\,dS<\infty.
	$$
\end{teo}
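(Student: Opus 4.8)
The plan is to extract coercivity from the boundedness of $J_h(v_h)$ by inspecting each term of the functional and showing that each is separately controlled. Recall that
\[
J_h(v_h)=\int_\O |\nabla v_h + R_h(v_h)|^{p(x)}\,dx + \int_\O |v_h-\xi|^2\,dx
+\int_{\partial\O}|v_h-u_D|^2{\bf h}^{\nicefrac{-2}{p'(x)}}\,dS
+\int_{\gi}|\[v_h\]|^2{\bf h}^{\nicefrac{-2}{p'(x)}}\,dS,
\]
and all four terms are nonnegative. Since $J_h(v_h)\le C$, each summand is bounded by $C$ uniformly in $h$. First I would control the jump seminorm. The interior-face term gives a uniform bound on $\|\[v_h\]{\bf h}^{\nicefrac{-1}{p'(x)}}\|_{L^2(\gi)}$, and because $p(x)\le 2$ we have $L^2(\gi)\hookrightarrow L^{p(\cdot)}(\gi)$ (Theorem \ref{imb}), so the $L^{p(\cdot)}(\gi)$-piece of $|v_h|_{\wpt}$ is bounded. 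This is exactly the hypothesis needed to apply Lemma \ref{lifcont}, which yields $\|R_h(v_h)\|_{\lp}\le C$ uniformly.

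Next I would recover the gradient bound. The first term of $J_h$ controls $\|\nabla v_h + R_h(v_h)\|_{\lp}$ via Proposition \ref{equi}: boundedness of the modular $\varrho_{p(\cdot)}(\nabla v_h+R_h(v_h))$ forces boundedness of the norm (if the norm exceeded $1$ it would dominate the modular by parts (2)). Then by the triangle inequality
\[
\|\nabla v_h\|_{\lp}\le \|\nabla v_h + R_h(v_h)\|_{\lp}+\|R_h(v_h)\|_{\lp}\le C,
\]
using the lifting bound just obtained. Combined with the jump estimate, this gives $|v_h|_{\wpt}\le C$ uniformly. The boundary "moreover" statement follows by the same $L^2\hookrightarrow L^{p(\cdot)}$ comparison applied on $\partial\O$: the $L^2$-boundary term bounds $\|(v_h-u_D){\bf h}^{\nicefrac{-1}{p'(x)}}\|_{\lpb}$, whence $\int_{\partial\O}|v_h-u_D|^{p(x)}{\bf h}^{1-p(x)}\,dS$ is bounded by Proposition \ref{equi}, since the exponent on ${\bf h}$ matches $-p(x)/p'(x)=1-p(x)$.

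The remaining and least automatic step is the $L^1(\O)$ bound, because $J_h$ only directly controls $\|v_h-\xi\|_{L^2(\O)}$ on each element but the function is discontinuous across faces, so a naive Poincaré argument does not apply to $v_h$ globally. I expect this to be the main obstacle. The idea is to use the $L^2(\O)$-fidelity term together with $\xi\in L^2(\O)$ to get $\|v_h\|_{L^2(\O)}\le C$, hence $\|v_h\|_{L^1(\O)}\le |\O|^{1/2}\|v_h\|_{L^2(\O)}\le C$ directly, bypassing any broken Poincaré inequality entirely. Adding the $L^1$ bound to the already-established seminorm bound gives the full conclusion $\sup_h(\|v_h\|_{L^1(\O)}+|v_h|_{\wpt})<\infty$. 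The only subtlety to check carefully is the uniformity of the embedding constant in $L^2(\gi)\hookrightarrow L^{p(\cdot)}(\gi)$ and on $\partial\O$ as $h$ varies, but this follows from $p(x)\le 2$ and the finite measure of the boundary and interior skeleton, which are controlled independently of $h$.
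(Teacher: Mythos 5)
Your proposal retraces the paper's own proof almost step for step: bound the skeleton term of $J_h$, apply Lemma \ref{lifcont} to get $\|R_h(v_h)\|_{\lp}\le C$, recover the gradient bound (you do it via Proposition \ref{equi} plus the triangle inequality, the paper via the pointwise inequality of Proposition \ref{desip}; the two are interchangeable), and pass from the $L^2$-weighted boundary and jump terms to the $p(x)$-weighted ones through $L^2\hookrightarrow L^{p(\cdot)}$. Two remarks on the parts that work. Your treatment of the $L^1(\O)$ bound, directly from the fidelity term via $\|v_h\|_{L^1(\O)}\le |\O|^{1/2}\left(\|v_h-\xi\|_{L^2(\O)}+\|\xi\|_{L^2(\O)}\right)$, is correct and is in fact more self-contained than the paper, which refers this step to the end of the proof of Theorem 6.2 in \cite{DLM}. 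Also, a harmless imprecision: Lemma \ref{lifcont} takes as input the quantity $\|{\bf h}^{-1/p'(x)}\[v_h\]\|_{L^2(\gi)}$, which the hypothesis bounds directly; the $L^{p(\cdot)}(\gi)$-piece of the seminorm is not what that lemma needs.

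The genuine gap is exactly the point you wave off in your last sentence. The measure of the interior skeleton is \emph{not} controlled independently of $h$: a shape-regular mesh has on the order of $h^{-N}$ faces, each of $(N-1)$-measure of order $h^{N-1}$, so $|\gi|\sim h^{-1}\to\infty$ as $h\to0$. For constant $p<2$ the operator norm of $L^2(\gi)\hookrightarrow L^{p}(\gi)$ is $|\gi|^{1/p-1/2}$ (sharp, by constant functions), so the uniform bound on $\|{\bf h}^{-1/p'(x)}\[v_h\]\|_{L^2(\gi)}$ does not yield a uniform bound on $\|{\bf h}^{-1/p'(x)}\[v_h\]\|_{\lpi}$, which is the jump part of $|v_h|_{\wpt}$. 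This step is not merely unjustified; it cannot be repaired: take $N=2$, $p\in(1,2)$ constant, $\xi=u_D=0$, a uniform mesh, and $v_h$ the checkerboard function with values $\pm\tfrac12 h^{1/2+1/p'}$. Then $\nabla v_h=0$ elementwise, the skeleton term of $J_h$ equals $h\,|\gi|\le C$, the first term of $J_h$ is bounded by Lemma \ref{lifcont} together with Proposition \ref{equi}, and the fidelity and boundary terms tend to $0$; hence $J_h(v_h)\le C$, while $\|{\bf h}^{-1/p'}\[v_h\]\|_{L^{p}(\gi)}=h^{1/2}|\gi|^{1/p}\sim h^{1/2-1/p}\to\infty$. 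Your boundary argument survives precisely because $|\partial\O|$ is $h$-independent, but for $\gi$ the uniformity claim is false. To be fair, the paper's own proof hides the identical problem inside the unquantified phrase ``the fact that $L^2\subset L^{p(\cdot)}$,'' so your proposal inherits a flaw of the paper itself; but the justification you offer for the crucial uniformity (an $h$-independent bound on the measure of $\gi$) is concretely wrong, and this is where the argument breaks.
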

\begin{proof}
	Since $J_h(v_h)\leq C,$ we have that
	\begin{equation*}
		\int_{\Gamma_{int}}|\[v_h\]|^{2}
		{\bf{h}}^{\nicefrac{-2}{p'(x)}}\, dS\leq C
	\end{equation*}
	then, by Lemma \ref{lifcont}, $\|R_h(v_h)\|_{\lp}$ 
	is bounded. Therefore, using Proposition \ref{desip},
	we have 
	\begin{equation*}
		J_h(v_h)+C\geq C\int_{\O} |\nabla v_h|^{p(x)}\, dx+
		\int_{\partial\O} |v_h-u_D|^2
		 {\bf{h}}^{-\nicefrac{2}{p'(x)}}\, dS
		 + \int_{\Gamma_{int}}|\[v_h\]|^{2}
		 {\bf{h}}^{\nicefrac{-2}{p'(x)}}\, dS.
	\end{equation*}

	By the above inequality and the fact that 
	$L^2\subset L^{p(\cdot)}$ we get
	\begin{align*}
		\int_{\O}|\nabla v_h|^{p(x)}\,dx &\leq C,\\
		\int_{\partial\O} |v_h-u_D|^{p(x)} 
		{\bf{h}}^{1-p(x)}\, dS &\leq C,\\
		\int_{\Gamma_{int}}|\[v_h\]|^{p(x)}
		{\bf{h}}^{1-p(x)}\, dS &\leq C.
	\end{align*}
	Finally, the proof follows as in the end 
	of the proof of Theorem 6.2 in \cite{DLM}.
\end{proof}

The following theorem was proved in \cite{DLM}.

\begin{teo}\label{fusion}
	Let $u_h\in S^k(\th)$ be such that
	$$
		\sup_{h\in(0,1]}\left(\|u_h\|_{L^1(\Omega)}+|u_h|_{\wpt}\right)<\infty
		\quad\mbox{ and }\quad
		\sup_{h\in(0,1]}\int_{\partial\O}|u_h-u_D|^{p(x)}{\bf{h}}^{1-p(x)}\,dS<\infty.
	$$
	Then, there exist a sequence
	$h_j\to0$ and a function $u\in\wp$ such  that
	\begin{align*}
    		u_{h_j}&\stackrel{*}{\rightharpoonup} u\quad\ \ \mbox{weakly* in } BV(\O)\\
    		\nabla u_{h_j}+R_h(u_{h_j})&\rightharpoonup \nabla u \quad\mbox{weakly in } \lp,\\
     		u_{h_j}&\rightarrow u\quad\ \ \mbox{strongly in } L^{p(\cdot)}(\partial\O),\\
    		u_{h_j}&\rightarrow u\quad\ \ \mbox{strongly in } L^{s(\cdot)}(\O)\quad \forall s\in\mathcal{K},
    		%\label{lqb} u_{h_j}&\rightarrow   u\quad\ \ \mbox{strongly in } L^{t(\cdot)}(\partial\O).
	\end{align*}
	where $\mathcal{K}=\{s \in L^{\infty}(\O)\colon 1 \le s(x) < p^*(x) -\varepsilon \mbox{ for some }\varepsilon > 0\}$.
\end{teo}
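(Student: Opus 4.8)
The plan is to read this as a compactness statement for the broken (discontinuous) finite element functions: first upgrade the seminorm bound to a uniform bound in $BV(\O)$, use $BV$-compactness to extract an $L^1(\O)$-convergent subsequence with limit $u$, and then identify $u$ as an element of $\wp$ together with all the listed convergences by a consistency argument built around the lifting operator.

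First I would bound $\{u_h\}$ in $BV(\O)$. For $u_h\in S^k(\th)$ the distributional gradient splits into a volume part $\nabla u_h$ and a jump part supported on $\gi$, so that $|Du_h|(\O)=\int_\O|\nabla u_h|\,dx+\int_{\gi}|\[u_h\]|\,dS$. The volume term is controlled since $\|\nabla u_h\|_{\lp}$ is bounded and $\lp\hookrightarrow L^1(\O)$ by Theorem \ref{imb}. For the jump term the key elementary estimate is Young's inequality, applied pointwise on each face to the conjugate pair $p(x),p'(x)$ with $a=|\[u_h\]|{\bf h}^{-1/p'(x)}$ and $b={\bf h}^{1/p'(x)}$, giving $|\[u_h\]|\le \tfrac1{p(x)}|\[u_h\]|^{p(x)}{\bf h}^{1-p(x)}+\tfrac1{p'(x)}{\bf h}$. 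Integrating over $\gi$, the first term is bounded because the seminorm hypothesis forces the modular $\int_{\gi}|\[u_h\]|^{p(x)}{\bf h}^{1-p(x)}\,dS$ to be bounded (Proposition \ref{equi}), while $\int_{\gi}{\bf h}\,dS=\sum_e\diam(e)\,\H(e)\approx\sum_\kappa h_\kappa^N\approx|\O|$ by Hypothesis \ref{mesh}. Together with $\|u_h\|_{L^1(\O)}$ bounded, this yields a uniform $BV$ bound, hence a subsequence with $u_{h_j}\to u$ in $L^1(\O)$ and converging weakly-$\ast$ in $BV(\O)$.

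Next I would produce the weak gradient limit and show $u\in\wp$. By Lemma \ref{lifcont} and the seminorm bound, $\|R_h(u_h)\|_{\lp}$ is bounded, so $\nabla u_{h_j}+R_h(u_{h_j})$ is bounded in the reflexive space $\lp$ (Theorem \ref{ref}) and, along a further subsequence, converges weakly to some $G$. The heart of the proof is to identify $G=\nabla u$. For $\phi\in C^\infty(\overline\O)^N$, elementwise integration by parts gives $\int_\O\nabla u_h\cdot\phi\,dx=-\int_\O u_h\div\phi\,dx+\int_{\gi}\[u_h\]\cdot\{\phi\}\,dS+\int_{\partial\O}u_h(\phi\cdot\nu)\,dS$, while testing Definition \ref{lifting} against the $L^2$-projection $\phi_h\in S^l(\th)^N$ of $\phi$ gives $\int_\O R_h(u_h)\cdot\phi_h\,dx=-\int_{\gi}\[u_h\]\cdot\{\phi_h\}\,dS$. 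Adding these, the residual $\int_{\gi}\[u_h\]\cdot\{\phi-\phi_h\}\,dS$ is bounded by $\|\phi-\phi_h\|_{L^\infty}\int_{\gi}|\[u_h\]|\,dS\le Ch\to0$ (using the $L^1$ jump bound from the previous step and the $O(h)$ projection estimate), and $\int_\O R_h(u_h)\cdot(\phi-\phi_h)\,dx\to0$ by the $\lp$ bound on $R_h(u_h)$ and $\|\phi-\phi_h\|_{L^{p'(\cdot)}(\O)}\to0$. Passing to the limit yields $\int_\O G\cdot\phi\,dx=-\int_\O u\div\phi\,dx+\int_{\partial\O}u_D(\phi\cdot\nu)\,dS$; restricting to $\phi\in C^\infty_c(\O)^N$ identifies $G=\nabla u$ distributionally, and since $G\in\lp$ a Sobolev--Poincar\'e argument places $u$ in $\wp$, which is exactly the weak convergence $\nabla u_{h_j}+R_h(u_{h_j})\rightharpoonup\nabla u$ in $\lp$.

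For the boundary, I would note that since ${\bf h}\le h\le 1$ and $p(x)>1$ one has $\int_{\partial\O}|u_h-u_D|^{p(x)}\,dS\le h^{p_1-1}\int_{\partial\O}|u_h-u_D|^{p(x)}{\bf h}^{1-p(x)}\,dS\le Ch^{p_1-1}\to0$, so $u_h\to u_D$ strongly in $\lpb$ by Proposition \ref{equi}; comparing the boundary term in the consistency identity above with the divergence theorem for $u\in\wp$ forces the trace of $u$ to equal $u_D$, which gives the claimed strong boundary convergence $u_{h_j}\to u$ in $\lpb$. Finally, strong convergence in $L^{s(\cdot)}(\O)$ for $s\in\mathcal K$ follows by interpolating the $L^1$ convergence against a uniform bound in $\lpe$: the latter comes from a discrete (broken) Sobolev--Poincar\'e inequality $\|u_h\|_{\lpe}\le C(\|u_h\|_{L^1(\O)}+|u_h|_{\wpt})$, after which the condition $s(x)<p^*(x)-\ep$ guarantees uniform integrability of $|u_{h_j}-u|^{s(x)}$, upgrading a.e.\ convergence to modular, hence norm, convergence (Proposition \ref{equi}). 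I expect the main obstacle to be the gradient-identification step: correctly combining the elementwise integration by parts with the nonconforming lifting operator and showing that all interpolation and consistency errors vanish in the limit, since this single argument simultaneously yields $u\in\wp$, the weak convergence of the full discrete gradient, and the recovery of the boundary datum.
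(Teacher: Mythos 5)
First, a point of reference: this paper does not actually prove Theorem \ref{fusion}; it is quoted from the reference \cite{DLM}, so there is no internal proof to compare your attempt against line by line. Judged on its own, your outline follows what is surely the intended route (BV-compactness for broken piecewise-polynomial functions, consistency of the lifting operator against projected test functions, boundary recovery, and a discrete Sobolev bound to widen the integrability range), and the explicit computations you give are correct: the Young-inequality bound $|\[u_h\]|\le \tfrac{1}{p(x)}|\[u_h\]|^{p(x)}{\bf h}^{1-p(x)}+\tfrac{1}{p'(x)}{\bf h}$ combined with $\int_{\gi}{\bf h}\,dS\le C|\O|$ (Hypothesis \ref{mesh}) does give a uniform bound on $\int_{\gi}|\[u_h\]|\,dS$ and hence the $BV$ bound; the identification of the weak $\lp$-limit of $\nabla u_{h_j}+R_h(u_{h_j})$ through elementwise integration by parts plus the defining identity of $R_h$ tested on the $L^2$-projection $\phi_h$ is the right consistency argument, and the error terms vanish for the reasons you state; the boundary convergence via ${\bf h}^{p(x)-1}\le h^{p_1-1}$ and Proposition \ref{equi} is correct, and comparing the limiting identity with the divergence theorem does force $\operatorname{tr}(u)=u_D$.

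Two issues remain, one minor and one substantive. Minor: to bound $\|R_h(u_h)\|_{\lp}$ from the hypothesis you must invoke Lemma \ref{lifcont1}, not Lemma \ref{lifcont}; the hypothesis controls $\|{\bf h}^{-1/p'(x)}\[u_h\]\|_{\lpi}$, whereas Lemma \ref{lifcont} requires the $L^2(\gi)$-norm of the weighted jumps, and since $p\le 2$ the $\lpi$ bound does not imply the $L^2(\gi)$ bound. Substantive: the strong convergence in $L^{s(\cdot)}(\O)$ for every $s\in\K$ rests entirely on the broken Sobolev--Poincar\'e inequality $\|v\|_{\lpe}\le C\left(\|v\|_{L^1(\O)}+|v|_{\wpt}\right)$ with $C$ independent of $h$, which you invoke as a known black box. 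That inequality is nowhere in the present paper and is the genuinely hard ingredient of the theorem: $BV$-compactness alone only yields strong convergence in $L^q(\O)$ for $q<N/(N-1)$, which is strictly weaker than the stated range $s(x)<p^*(x)-\ep$ as soon as $p_1>1$. So your argument correctly reduces Theorem \ref{fusion} to this uniform discrete embedding (a variable-exponent analogue of the Buffa--Ortner compact embedding of broken Sobolev spaces, which is precisely what \cite{DLM} establishes), but it does not prove it; without supplying or citing a proof of that inequality, the $L^{s(\cdot)}(\O)$ conclusion --- and with it the membership $u\in\lp$, hence $u\in\wp$ --- is not fully justified.
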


Before proving the convergence of the sequence of mimizers, we need an auxiliary lemma.

\begin{lema}\label{auxiliar}
	Let $h\in (0,1]$ and $u_D\in W^{2,2}(\O).$ 
	If $v\in W^{2,2}(\O)\cap\A,$ then there exists $v_h\in U^1(\th)$ such that
	$$
	\di \|v_h-v\|_{H^1(\O)}\to 0 \quad \mbox{ as } h\to 0,
	$$
	and $$J_h(v_h)\to J(v)\quad \mbox{ as } h\to 0.$$
\end{lema}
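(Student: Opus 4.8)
The plan is to take $v_h=\Ih v\in U^1(\th)$ to be a standard nodal Lagrange interpolant of $v$. Since in the present setting $N\le 3$, we have $W^{2,2}(\O)\hookrightarrow C(\overline{\O})$, so the interpolant is well defined and lies in $U^1(\th)$; alternatively one may use a Scott--Zhang type quasi-interpolant, which obeys the same estimates without requiring pointwise values. The crucial structural observation is that, being continuous, $v_h$ has no jumps across interior faces, so $\[v_h\]=0$ on $\gi$. By Definition \ref{lifting} this forces $R_h(v_h)=0$, so $Bv_h=\nabla v_h$, and both $\gi$-integrals in $G$ vanish. Hence
\begin{equation*}
	J_h(v_h)=\int_{\O}|\nabla v_h|^{p(x)}\,dx+\int_{\O}|v_h-\xi|^{2}\,dx
	+\int_{\partial\O}|v_h-u_D|^{2}\,{\bf h}^{\nicefrac{-2}{p'(x)}}\,dS .
\end{equation*}
The $H^1$-convergence $\|v_h-v\|_{H^1(\O)}\to 0$ is the standard interpolation estimate $\|v-\Ih v\|_{H^1(\O)}\le Ch\|v\|_{W^{2,2}(\O)}$. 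It then remains to pass to the limit in each of the three surviving terms.

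For the gradient term, since $p(x)\le 2$, Theorem \ref{imb} gives the continuous embedding $L^{2}(\O)\hookrightarrow\lp$; hence $\nabla v_h\to\nabla v$ strongly in $L^2(\O)^N$ implies $\nabla v_h\to\nabla v$ strongly in $\lp$, and Proposition \ref{debil}(2) yields $\int_{\O}|\nabla v_h|^{p(x)}\,dx\to\int_{\O}|\nabla v|^{p(x)}\,dx$. For the fidelity term, $v_h\to v$ in $L^2(\O)$ (a consequence of the $H^1$-convergence) gives directly $\int_{\O}|v_h-\xi|^{2}\,dx\to\int_{\O}|v-\xi|^{2}\,dx$, since $\xi\in L^2(\O)$.

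The main obstacle is the boundary term, where the weight ${\bf h}^{\nicefrac{-2}{p'(x)}}$ blows up as $h\to0$ and one must show that the interpolation error on $\partial\O$ decays fast enough to compensate. Since $v\in\A$, the trace of $v$ equals $u_D$ on $\partial\O$, so $|v_h-u_D|=|v_h-v|$ there. Because $p(x)\le 2$ we have $2/p'(x)\le 1$, whence ${\bf h}^{\nicefrac{-2}{p'(x)}}\le{\bf h}^{-1}$ for ${\bf h}\le 1$, and it suffices to control $\int_{\partial\O}|v_h-v|^2{\bf h}^{-1}\,dS$. On each boundary face $e\subset\partial\kappa$ I would combine the scaled trace inequality
$$
	\|w\|_{L^2(e)}^2\le C\left(h_\kappa^{-1}\|w\|_{L^2(\kappa)}^2+h_\kappa\|\nabla w\|_{L^2(\kappa)}^2\right)
$$
applied to $w=v-v_h$ with the interpolation bounds $\|v-v_h\|_{L^2(\kappa)}\le Ch_\kappa^2\|v\|_{W^{2,2}(\kappa)}$ and $\|\nabla(v-v_h)\|_{L^2(\kappa)}\le Ch_\kappa\|v\|_{W^{2,2}(\kappa)}$, obtaining $\|v-v_h\|_{L^2(e)}^2\le Ch_\kappa^3\|v\|_{W^{2,2}(\kappa)}^2$. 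Using Hypothesis \ref{mesh}(b) to replace ${\bf h}^{-1}=h_e^{-1}$ by $Ch_\kappa^{-1}$ and summing over the boundary faces gives
$$
	\int_{\partial\O}|v_h-v|^2{\bf h}^{\nicefrac{-2}{p'(x)}}\,dS\le Ch^2\|v\|_{W^{2,2}(\O)}^2\longrightarrow 0 .
$$
Collecting the three limits yields $J_h(v_h)\to J(v)$, completing the proof.
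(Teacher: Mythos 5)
Your proof is correct, and its skeleton coincides with the paper's: choose a continuous interpolant $v_h\in U^1(\th)$, so that $\[v_h\]=0$, hence $R_h(v_h)=0$ and $Bv_h=\nabla v_h$; use that $v=u_D$ on $\partial\O$ since $v\in\A$; obtain $\|v_h-v\|_{H^1(\O)}\to0$ from standard interpolation theory; and pass to the limit term by term, the gradient term via $L^2(\O)\hookrightarrow\lp$ and Proposition \ref{debil}(2). The genuine difference is the treatment of the weighted boundary term. The paper argues globally: it quotes the estimate $\|v-v_h\|_{\ldosb}\le Ch\|D^2v\|_{\ldos}$ and bounds the weight by ${\bf h}^{-\nicefrac{2}{p'(x)}}\le Ch^{-\nicefrac{2}{p_1}}$, so the boundary term is $O(h^{2-\nicefrac{2}{p_1}})$ and vanishes precisely because $p_1>1$. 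You argue locally: the scaled trace inequality combined with elementwise interpolation estimates gives $\|v-v_h\|_{L^2(e)}^2\le Ch_\kappa^3\|v\|_{W^{2,2}(\kappa)}^2$, and together with ${\bf h}^{-\nicefrac{2}{p'(x)}}\le{\bf h}^{-1}$ and $h_e\sim h_\kappa$ this yields the sharper rate $O(h^2)$, uniform in $p_1$. What each route buys: the paper's is shorter (one citation to Ciarlet does all the work), but its pointwise weight bound ${\bf h}^{-\nicefrac{2}{p'(x)}}\le Ch^{-\nicefrac{2}{p_1}}$ tacitly requires the local face size not to be much smaller than a power of the global mesh size (e.g.\ quasi-uniformity ${\bf h}\ge ch$), whereas your local argument avoids this, needing only the comparability $h_e\ge ch_\kappa$ for boundary faces. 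One small caveat on that last point: Hypothesis \ref{mesh}(b) states $ch_\kappa\le h_e\le Ch_\kappa$ only for interior faces $e=\kappa\cap\kappa'$, so for boundary faces you are using a natural extension of it (the upper bound $h_e\le h_\kappa$ is automatic from $e\subset\partial\kappa$, but the lower bound is not); this is an unstated mesh assumption of the same mild nature as the one implicit in the paper's own proof, so it is not a gap relative to the paper.
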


\begin{proof}
	Given $v\in W^{2,2}(\O)\cap\A,$ by standard approximation theory (see \cite[Theorem 3.1.5]{Ci}), 
	we have that there exists $v_h\in U^1(\th)$ such that 
	$$
		\di \|v_h-v\|_{H^1(\O)}\to 0 
	$$
	as $h\to 0,$ and
	$$
		\|v-v_h\|_{\ldosb}\leq C h \|D^2 v\|_{\ldos}.
	$$
	Since $1<p_1\le p(x)\le 2,$ we have
	$$
		\int_{\partial\O} |v-v_h|^2 {\bf{h}}^{-\nicefrac{2}{p'(x)}}  \, dS\le
		C h^{\nicefrac{-2}{p_1}} \int_{\partial\O} |v-v_h|^2 \, dS 
		\leq C h^{\nicefrac{-2}{p_1}}  h^2 \|D^2 v\|^2_{\ldos}\to 0
	$$
	as $h\to0.$

	Finally, since $v_h\in \wp$, we have that $\[v_h\]=0$ and $R_h(v_h)=0$. Then, using Proposition \ref{debil}, we have 
	$$
		J_h(v_h)=\int_{\O}|\nabla v_h|^{p(x)}+|v_h-\xi|^{2}\, dx+
		\int_{\partial\O} |v_h-u_D|^2 {\bf{h}}^{\nicefrac{-2}{p'(x)}}  \, dS\to 
		\int_{\O}(|\nabla v|^{p(x)}+|v-\xi|^{2})\, dx
	$$ 
	as $h\to0.$ The proof is complete. 
\end{proof}

Now we prove Theorem \ref{conv}.

%\begin{teo}\label{conv}
%	Let  $\O$ be a polygonal domain in $\R^N$, 
%	$p:\overline{\O}\to [p_1,2]$ ($\nicefrac{N}2<p_1\leq 2$) be a $\log$-H\"{o}lder continuous
%	and $u_D\in W^{2,2}(\O)$.
%	For each $h\in (0,1]$, let $u_h\in S^k(\th)$ be the minimizer of ${J}_h$. If $u$ is the minimizer of ${J}$ then
%	\begin{align*}
%		u_{h}&\to u \mbox{ strongly in } L^{s(\cdot)}(\O) \quad \forall s\in \mathcal{K},\\
%		u_{h}&\to u \mbox{ strongly  in } L^{2}(\partial\O),\\
%		 J_h(u_{h})&\rightarrow  J(u),\\
%		 R_h(u_h)&\to 0,\\
%		\int_{\partial\O}|u_h-u_D|^{2}{\bf{h}}^{\nicefrac{-2}{p'(x)}}\,dS &
%		+\int_{\Gamma_{int}}|\[u_h\]|^{2}{\bf{h}}^{\nicefrac{-2}{p'(x)}}\, dS \rightarrow  0,\\
%		\nabla u_{h}&\to \nabla u \mbox{ strongly in } \lp,
%	\end{align*}
%	where $\mathcal{K}=\{s \in L^{\infty}(\O)\colon 1 \le s(x) < p^*(x) -\varepsilon \mbox{ for some }\varepsilon > 0\}$.
%\end{teo}

\begin{proof}[Proof of Theorem \ref{conv}] By Lemma \ref{auxiliar}, 
	there exists $w_h\in U^1(\th)$ such that $J_h(w_{h})\rightarrow  J(u_D).$ Then
	\begin{equation}
		J_h(u_{h})\leq J_h(w_{h})\leq C\quad \forall h>0.
		\label{cotaimp}
	\end{equation}
	By Theorem \ref{cm} and Theorem \ref{fusion}, 
	we obtain that there exists a subsequence $u_{h_j}$ of $u_h$ 
	such that
	\begin{align*}
		u_{h_j}&\to u \mbox{ strongly in } L^{s(\cdot)}(\O) \quad \forall s\in \mathcal{K}\\
		u_{h_j}&\to u \mbox{ strongly  in } L^{p(\cdot)}(\partial\O),\\
		\nabla u_{h_j}+R_h(u_{h_j})&\rightharpoonup \nabla u \quad\mbox{weakly in } \lp.
	\end{align*}
	
	On the other hand, by \eqref{cotaimp}
	$$
		\int_{\partial\O} |u_{h_j}-u_D|^2 
		{\bf{h_j}}^{-2/p'(x)}\, dS\leq C
	$$ 
	then
	$$
		\int_{\partial\O} |u_{h_j}-u_D|^2 dS\to 0 
		\quad\mbox{as }{j}\to+\infty,
	$$
	that is $u_{h_j}\to u_D$ strongly in $\ldosb.$
	 Therefore, since $u_{h_j}\to u$  strongly  in  
	 $L^{p(\cdot)}(\partial\O),$ 
	we have that $u=u_D $ on $\partial\Omega$, which proves 
	that $u\in \mathcal{A}$.

	Now, since $u_{h_j}\to u$ strongly in $L^2(\O)$ (due to 
	$s(x)\equiv2\in\mathcal{K}$)
	and 
	$\nabla u_{h_j}+R_h(u_{h_j})\rightharpoonup \nabla u$ 
	weakly in  $\lp$ we have
	$$
		J(u)\le\liminf_{j\to+\infty} \int_{\O}|\nabla 
		u_{h_j}+R(u_{h_j})|^{p(x)} 
		+ |u_{h_j}-\xi|^2 \, dx\leq \liminf_{j\to+\infty} 
		J_{h_j}(u_{h_j})
		\leq\limsup_{j\to+\infty} J_{h_j}(u_{h_j}).
	$$
	Let $v\in \mathcal{A}\cap W^{2,2}(\O)$ and $v_{h_j}$ 
	as in Lemma \ref{auxiliar}, we obtain
	$$
		J(u)\leq \liminf_{j\to+\infty} J_{h_j}(u_{h_j})
		\leq\limsup_{j\to+\infty}J_{h_j}(u_{h_j})
		\leq \lim_{j+\infty} J_{h_j}(v_{h_j})
		=J(v).
	$$
	By a density argument, we also have that 
	$$
		J(u)\leq \liminf_{j\to+\infty} J_{h_j}(u_{h_j})
		\leq\limsup_{j\to+\infty}J_{h_j}(u_{h_j})\leq J(w)
	$$
	for any $w\in  \mathcal{A}.$ Therefore $u$ is a 
	minimizer of $J.$ Moreover, if we take $w=u,$ 
	we have that $J_{h_j}(u_{h_j})\to J(u)$ as 
	$j\to+\infty.$ Thus, 
	\begin{equation*}
		\int_{\Gamma_{int}}|\[u_{h_j}\]|^{2}
		{\bf{h_j}}^{\nicefrac{-2}{p'(x)}}\, dS. 
		\to 0\quad\mbox{as }j\to+\infty,
	\end{equation*}
	Then, by Lemma \ref{lifcont}, we have that 
	$R_{h_j}(u_{h_j})\to 0$ as $j\to+\infty$ and
	$$
		\nabla u_{h_j}\rightharpoonup \nabla u \quad 
		\mbox{weakly in } L^{p(\cdot)}(\O).
	$$ 
	
	On the other hand, since
	$$
		\nabla u_{h_j}+R_{h_j}(u_h) \rightharpoonup\nabla u 
		\mbox{ weakly in } \lp \mbox{ and }
		\int_{\O}|\nabla u_{h_j}+R_h(u_{h_j})|^{p(x)}\, dx 
		\to \int_{\O}|\nabla u|^{p(x)}\, dx,
	$$
	by Proposition  \ref{debil}, 
	$\nabla u_{h_j}+R_{h_j}(u_{h_j})\to \nabla u$ strongly 
	in $\lp$. 
	Therefore, since $R_{h_j}(u_{h_j})\to 0$ strongly in
	$\lp$, we get that $\nabla u_{h_j}\to \nabla u$ 
	strongly in $\lp$.

	\medskip
	
	Finally, since $u$ is the unique minimizer, the hole 
	sequence converges.
\end{proof}

\begin{remark}
In the above theorem, we ask that $p_1>\nicefrac{N}2$ since in those cases we obtain $p^*(x)> 2.$ 
Observe that, when $N\in\{1,2\},$ we are not adding any new assumption on $p_1.$
\end{remark}
%%%%%%%%%%%%%%%%%%%%%%%%%%%%%%%%%%%%%%%%%%%%%%%%%%%%%%%%%%%%
%%%%%%%%%%%%%%%%%%%%%%%%%%%%%%%%%%%%%%%%%%%%%%%%%%%%%%%%%%%
\section{The decomposition--coordination method}\label{dcor}
In this section we will study the  
decomposition--coordination method to approximate,  
for each $h$, the minimizer of $J_h$.

\medskip

Throughout this section, to simplify notation, we omit the 
subindex $h,$ and $\l2\cdot\r2,$  $\|\cdot \|,$ and  
$\langle\cdot,\cdot\rangle$ denote the $L^2$-norm, 
$L^2\times L^2$-norm and  the inner product associated 
to the $L^2\times L^2$-norm, respectively.

\medskip

Let $V=S^k(\th)$ and  $H=S^l(\th)\times S^l(\th)$ where 
$k,l\in\mathbb{N}_0$ with $l\geq k-1$ we consider the 
following functionals
\begin{itemize}\renewcommand{\labelitemi}{\tiny$\blacktriangleright$}
	\item $F\colon H\to\R,$ 
	${\displaystyle F(q)\coloneqq\int_{\O}
	|q|^{p(x)}\, 
	dx};$\vspace{.3 cm}
	\item $G\colon V\to\R,$  
	\begin{math}\displaystyle G(v)
	\coloneqq\int_{\O}|v-\xi|^{2} 
	\, dx+\int_{\Gamma_{int}} |\[v\]|^2 
	{\bf{h}}^{-\nicefrac{2}{p^\prime(x)}}\, dS+ 
	\int_{\partial\O} |v-u_D|^2 {\bf{h}}^{-\nicefrac{2}
	{p^\prime(x)}}\, dS;\end{math}\vspace{.3 cm}
	\item $B\colon V\to H,$ 
	$\displaystyle Bv\coloneqq R(v)+\nabla v.$
\end{itemize}

Observe that 
$$
	J(v)=F(Bv)+G(v),
$$ 
$F$ and $G$ are convex and Gateaux--diffentiable 
functionals, $B$ is a linear operator, and
$$
	\dom(F\circ B)\cap \dom(G)\neq\emptyset.
$$

In \cite[Chapter VI]{G}, in a more general context, the 
author show that
the problem
\begin{equation}
	\min_{v\in V} J(v)=\min_{v\in V}\{F(Bv)+G(v)\}
	\label{P}
\end{equation}
is equivalent to
\begin{equation}
	\min_{(v,q)\in W} \{F(q)+G(v)\},
	\label{pi}
\end{equation}
where
$$
	W=\{(v,q)\in V\times H: Bv=q\}.
$$

\medskip
 
We then define for $r\ge0$ an augmented Lagrangian 
$\mathcal{L}_r$ associated with \eqref{pi}, by
$$
	\mathcal{L}_r\colon V\times H\times H\to \R
$$
$$
	\mathcal{L}_r(v,q,\lambda)\coloneqq 
	F(q)+G(v)+\langle\lambda,Bv-q\rangle+\frac{r}{2}\l2 Bv-
	q\r2^2,
$$
and we will say that  $(u,\eta,\lambda) \in V\times H\times 
H$ is a saddle point of $\mathcal{L}_r$ on $V\times H\times 
H$ if 
\begin{equation}\label{sadle}
 	\mathcal{L}_r(u,\eta,\mu)\le \mathcal{L}_r(u,\eta,	
 	\lambda)\le  \mathcal{L}_r(v,q,\lambda) 
	\quad \forall (v,q,\mu) \in V\times H\times H.
\end{equation}

The following lemma  establishes a fundamental relationship 
between the saddle points of $\mathcal{L}_r$ and the 
solution of \eqref{P}. For
the proof see \cite[Theorem 2.1-- Chapter VI]{G}.
\begin{lema}\label{sillasol} 
	Let $(u,\eta,\lambda)$ be a saddle point of $\mathcal{L}_r,$ on $V\times H\times H$ 
	then $u$ is the solution of \eqref{P} and $Bu=\eta.$
\end{lema}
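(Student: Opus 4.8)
The plan is to use the two halves of the saddle-point inequality \eqref{sadle} separately, extracting the constraint $Bu=\eta$ from the left inequality and the minimization property from the right one. First I would exploit $\mathcal{L}_r(u,\eta,\mu)\le\mathcal{L}_r(u,\eta,\lambda)$ for all $\mu\in H$. Since $F(\eta)$, $G(u)$ and the penalty term $\tfrac{r}{2}\l2 Bu-\eta\r2^2$ do not depend on $\mu$, this inequality collapses to $\langle\mu-\lambda,Bu-\eta\rangle\le 0$ for every $\mu\in H$. Testing with $\mu=\lambda+t\,w$ for arbitrary $w\in H$ and $t\in\R$ gives $t\,\langle w,Bu-\eta\rangle\le 0$ for all real $t$, which forces $\langle w,Bu-\eta\rangle=0$ for every $w$, hence $Bu=\eta$. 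This already proves the second assertion of the lemma.

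Next I would substitute $Bu=\eta$ into the Lagrangian. The coupling term $\langle\lambda,Bu-\eta\rangle$ and the penalty term both vanish, so the value at the saddle point simplifies to $\mathcal{L}_r(u,\eta,\lambda)=F(\eta)+G(u)=F(Bu)+G(u)=J(u)$. Then I would invoke the right inequality $\mathcal{L}_r(u,\eta,\lambda)\le\mathcal{L}_r(v,q,\lambda)$, which holds for all $(v,q)\in V\times H$, testing it with a well-chosen pair: given any $v\in V$, take $q=Bv$. Then $Bv-q=0$, so the coupling and penalty terms again drop out and $\mathcal{L}_r(v,Bv,\lambda)=F(Bv)+G(v)=J(v)$. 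Combining the two computations yields $J(u)\le J(v)$ for every $v\in V$, which is exactly the statement that $u$ solves \eqref{P}.

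This is a routine augmented-Lagrangian argument, so I do not expect a genuine obstacle; the only two points requiring care are that the range of $\mu$ in the first inequality is unrestricted (this is precisely what upgrades an inequality into the equality $Bu=\eta$), and that the choice $q=Bv$ in the second inequality is deliberate, exploiting that the pair $(v,Bv)$ is feasible for \eqref{pi} so that both the $\langle\lambda,\cdot\rangle$ and the $\tfrac{r}{2}\l2\cdot\r2^2$ contributions vanish. No use of the hypotheses (H1)--(H4) or of convexity is needed for this direction, since the saddle-point property is assumed outright.
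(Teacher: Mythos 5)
Your proof is correct, and it is essentially the standard argument: the paper does not reproduce a proof but defers to \cite[Theorem 2.1, Chapter VI]{G}, where exactly this reasoning is used (the unrestricted multiplier inequality forces $Bu=\eta$, and then testing the right inequality with the feasible pair $(v,Bv)$ yields $J(u)\le J(v)$ for all $v\in V$). Nothing is missing; in particular you correctly note that no convexity or hypotheses (H1)--(H4) are needed once the saddle point is assumed.
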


Then, a method for solving \eqref{P} is to solve the saddle point problem \eqref{sadle}.

\medskip

\begin{remark}\label{caracterizacion}
Let $(u,\eta,\lambda)$ be a saddle point of $\mathcal{L}_r,$ then
$$
	\mathcal{L}_r(u,\eta,\lambda)\le  \mathcal{L}_r(v,q,\lambda) \quad 
	\forall (v,q,\mu) \in V\times H\times H, \quad (u,\eta)\in V\times H.
$$
Therefore $(u,\eta)$ is characterized by
\begin{align*}
	G(v)-G(u)+\langle\lambda, B(v-u)\rangle+ r\langle Bu-\eta,B(v-u)\rangle &\geq 0, \quad\forall {v\in V},\, u\in V,\\
	F(q)-F(\eta)-\langle\lambda, q-\eta\rangle + r\langle \eta-Bu,q-\eta\rangle &\geq 0, \quad\forall q\in H,\, \eta\in H.
\end{align*}
Moreover, since $F$ and $G$ are convex and Gateaux--diffentiable, $(u,\eta)$ is also characterized by
\begin{equation}\label{sadle2}
	\begin{aligned}
		G'(u)(v-u)+\langle \lambda, B(v-u)\rangle + r\langle Bu-\eta,B(v-u)\rangle &\geq 0, \quad\forall {v\in V},\, u\in V,\\
		F'(\eta)(q-\eta)-\langle \lambda, q-\eta\rangle+ r\langle\eta-Bu,q-\eta\rangle&\geq 0, \quad \forall q\in H,\, \eta\in H,
	\end{aligned}
\end{equation}
where $G'$ and $F'$ are the Gateaux-derivative of $G$ and $F,$ respectively.

For both characterizations, see \cite[Chapter I and VI]{G}.
\end{remark}

\subsection{Algorithms.}
To solve the saddle point problem \eqref{sadle} we will use an Uzawa type algorithm and a variant of it. See \cite{Cea1,GLT1,GLT3}.
\subsection*{Algorithm 1.}
Let $\lambda^0\in H$:  then $\lambda^n$ is known , we
define $(u^n, \eta^n,\lambda^{n+1})\in V\times H\times H$ by
\begin{align*}
	\mathcal{L}_r(u^n,\eta^n,\lambda^{n})\leq  
	\mathcal{L}_r(v,q,\lambda^{n}) \quad \forall (v,q)\in 
	V\times H,\\
	\lambda^{n+1}=\lambda^n +\rho_n(Bu^n- \eta^n),\quad 
	\rho_n>0.
\end{align*}

\begin{remark}\label{carcal}
	Observe that the first inequality of this algorithm is equivalent to the following system of two coupled variational 
	inequalities,
	\begin{equation}\label{sadle31}
		\begin{aligned}
			G(v)-G(u^n)+\langle \lambda^n, B(v-u^n)\rangle+ 
			r\langle Bu^n-\eta^{n},B(v-u^n)\rangle 
			&\geq 0, \quad \forall v\in V, u^n\in V,\\
			F(q)-F(\eta^n)-\langle\lambda^n, q-\eta^n\rangle+ 
			r\langle\eta^n-Bu^n,q-\eta^n\rangle&\geq 0, \quad \forall \, q\in H, \eta^n \in H,
		\end{aligned}
	\end{equation}
	see \cite[Chapter VI--Section 3]{G}.
\end{remark}

The main difficulty of Algorithm 1 is that it requires  the solution of the coupled system of equations at each iteration.
To overcome this difficulty, in \cite{G} the authors propose the following algorithm. 
\subsection*{Algorithm 2.}
Let $(\eta^0,\lambda^1)\in H\times H$; then $(\eta^{n-1},\lambda^n)$ known, we 
define $(u^n, \eta^n,\lambda^{n+1})\in V\times H\times H$ by
\begin{equation}\label{sadle3}
	\begin{aligned}
		G(v)-G(u^n)+\langle\lambda^n, B(v-u^n)\rangle+ r\langle Bu^n-\eta^{n-1},B(v-u^n)\rangle &\geq 0, \quad \forall v\in V, u^n\in V,\\
		F(q)-F(\eta^n)-\langle\lambda^n, q-\eta^n\rangle+ r\langle\eta^n-Bu^n,q-\eta^n\rangle&\geq 0, \quad \forall q\in H, \eta^n\in H,\\
\lambda^{n+1}=\lambda^n +\rho_n(Bu^n- \eta^n),\quad \rho_n>0.
\end{aligned}
\end{equation}
Observe that now the two first equations are uncoupled. 

\medskip

\begin{remark}\label{lapapa2}
In \cite{G}, in a more general context, the convergence of both algorithms are proved.
More precisely, if $F,$ $G$ and $B$ satiefy the assumptions (H1), (H2) and (H3), then
\begin{equation}\label{glow1}
	\begin{aligned}
		\lim_{n\to+\infty} \| B u^{-n}-\eta^{-n}\|&= 0,\\
		\lim_{n\to+\infty} \langle F'(\eta^n)-F'(\eta),\eta^n-\eta\rangle&=0,\\
		\lim_{n\to+\infty} \|B u^{n}-\eta^{n}\|&= 0,\\
		\lim_{n\to+\infty} \|\eta^{n}-\eta\|&= 0,\\
		 Bu^{n}\to \eta=Bu & \mbox{ in } H,\\
		 \lambda^{n+1}-\lambda^n \to 0 &\mbox{  in } H,\\
		\lambda^n \mbox{ is } \mbox{bounded, }&\quad
	\end{aligned}
\end{equation}
see Theroem 4.1 and Theorem 5.1 in \cite[Chapter VI]{G}.
The assumption (H4) is only used to concluded that $u^n\to u$  in $V.$ 

In our case, (H4) does not hold, that is $B$ is not 
injective.  
To overcome the lack of this assumption, we use that our 
functional 
$G$ is Gateaux-differentiable and convex.
\end{remark}

\begin{proof}[Proof of Theorem \ref{conalg1}]
By Remark \ref{carcal} and using the same argument of Remark \ref{caracterizacion}, $u^n$ 
can be characterized as
\begin{equation}\label{sadle33}
	%\begin{aligned}
		G'(u^n)(v-u^n)+\langle\lambda^n, B(v-u^n)\rangle+ r\langle Bu^n-\eta^n,B(v-u^n)\rangle\geq 0, \quad \forall v\in V,\ u^n \in V,
		%F'(\eta^n)(q-\eta^n)-\langle\lambda^n, q-\eta^n\rangle + r\langle\eta^n-Bu^n,q-\eta^n\rangle&\geq 0, \quad \forall q\in H,\ \eta^n\in H.
	%\end{aligned}
\end{equation}

Let us denote $u^{-n}=u^n-u$  and $\eta^{-n}=\eta^n-\eta$. By Remark \ref{lapapa2}, we have that \eqref{glow1} holds. 
%\begin{equation}\label{glow1}
%	\begin{aligned}
%		\lim_{n\to+\infty} \| B u^{-n}-\eta^{-n}\|&= 0,\\
%		\lim_{n\to+\infty} \langle F'(\eta^n)-F'(\eta),\eta^n-\eta\rangle&=0,\\
%		\lim_{n\to+\infty} \|B u^{n}-\eta^{n}\|&= 0,\\
%		\lim_{n\to+\infty} \|\eta^{n}-\eta\|&= 0,\\
%		 Bu^{n}\to \eta=Bu & \mbox{ in } H,\\
%		 \lambda^{n+1}-\lambda^n \to 0 &\mbox{  in } H,\\
%		\lambda^n \mbox{ is } \mbox{bounded, }&\quad
%	\end{aligned}
%\end{equation}

On the other hand, taking $v=u^n$ in \eqref{sadle2}, $v=u$ in \eqref{sadle33} and summing we obtain
$$
	(G'(u^n)-G'(u))(u-u^n)+\langle\lambda^n-\lambda, 
	B(u-u^n)\rangle
	+r\langle B(u^n-u)+\eta-\eta^n,B(u-u^n)\rangle\ge 0,
$$
then
%\begin{equation}\label{glow}
$$
	(G'(u^n)-G'(u))(u^n-u)+\langle\lambda^n-\lambda, 
	B(u^n-u)\rangle+ r\langle B(u^n-u)-(\eta^n-\eta),
	B(u^n-u)\rangle
	\leq 0.
$$
%\end{equation}

Since $(G'(u^n)-G'(u))(u^n-u)\geq 0$, by \eqref{glow1}, we get
\begin{align*}
	(G'(u^n)-G'(u))(u^n-u)\!\!
	&=\!\!\int_{\O}\!\! |u^n-u|^{2}dx\!+\! \int_{\Gamma_{int}}\!\!\! |\[u^n-u\]|^2 {\bf{h}}^{-\nicefrac{2}{p^\prime(x)}}dS
	\!+\!
	\int_{\partial\O}\!\!\! |u^n-u|^2 {\bf{h}}^{-\nicefrac{2}{p^\prime(x)}}dS\\
	&\to 0\quad\mbox{as }n\to+\infty,
\end{align*}
then
\begin{equation}\label{conldos}
	\begin{aligned}
		u^n \to u &\mbox{  in } V,\\
		\[u^n-u\]\to 0 &\mbox{  in } \ldosg,\\
		u^n \to u &\mbox{  in } \ldosb.
	\end{aligned}
\end{equation}

Finally,
\begin{equation}\label{conldosd}
	\nabla u^{-n} \to 0 \quad \mbox{ in } H.
\end{equation}
due to
\begin{align*}
	\|R(u^{-n})\|_{\ldos}&\leq C \|{\bf{h}}^{-1/2}\[u^{-n}\]\|_{\ldosg}\to 0,\\
	B(u^{-n})&\to 0 \quad \mbox{  in } H.
\end{align*}
The proof is now completed.
\end{proof}

Finally, we prove the convergence of Algorithm 2.

\begin{proof}[Proof of Theorem \ref{conalg1}]
We began by observing that, as in the proof of Theorem \ref{conalg1}, by Remark \ref{lapapa2}, we obtain that \eqref{glow1} holds.

On the other hand, for this algorithm, we get that $u^n$ satisfies that
\begin{equation}\label{sadle42}
	G'(u^n)(v-u^n)+\langle \lambda^n, B(v-u^n)
	\rangle+ r\langle Bu^n-\eta^{n-1},B(v-u^n)
	\rangle\geq 0, \quad \forall v\in V
\end{equation}
Therefore, taking $v=u^n$ in \eqref{sadle2}, $v=u$ in \eqref{sadle42} and fallowing the lines of the proof of Theorem \ref{conalg1}, we get
\eqref{conldos} and \eqref{conldosd}. The proof is now completed.
\end{proof}
%%%%%%%%%%%%%%%%%%%%%%%%%%%%%%%%%%%%%%%%%%%%%%%%%%%%%%%%%%%%
%%%%%%%%%%%%%%%%%%%%%%%%%%%%%%%%%%%%%%%%%%%%%%%%%%%%%%%%%%%%
\section{Numerical Results} \label{numerical}
In this section, we only implement the uncoupled Algorithm 2. For any $h,$ 
we obtain a sequence $\{u^n_h\}$ such that $u^n_h\to u^h$ as $n\to+\infty,$ 
where  $u_h$ is the minimizer of $J_h$.

\medskip

For simplicity, we take
$$
	J_h(v)=F(Bv)+G(v),
$$
where
\begin{align*}
	F(q)&=\int_{\O} \dfrac{|q|^{p(x)}}{p(x)} \, dx,\\
	G(v)&=\frac{1}{2}\left(\int_{\O}|v-\xi|^{2}\, dx
	+\int_{\partial\O}|v-u_D|^{2}{\bf{h}}^{-\nicefrac{2}{p^\prime(x)}}\,dS
	+\int_{\Gamma_{int}}|\[v\]|^{2}{\bf{h}}^{-\nicefrac{2}{p^\prime(x)}}\, 
	dS\right).
\end{align*}

Observe that, this new definition of  $F$ does not change any of the results 
that we prove in the preceding sections.

\medskip

If we take $\rho_n=r$ then the algorithm is: 

Given 
$$
(\eta^0,\lambda^1)\in H \times H ,
$$
then, $(\eta^{n-1}_h,\lambda^n_h)$ known, we define $(u^{n}_h,\eta^{n}_h,\lambda^{n+1}_h)\in V\times H \times H$ by
\begin{equation}
	\begin{aligned}
	\int_{\O} (u^n_h-\xi) v\, dx +r\int_{\O} \left(Bu^n_h-\eta^{n-1}_h\right) Bv\, dx
	&+\int_{\O} \lambda^n_h Bv \, dx\\
	+\int_{\Gamma_{int}} \[u^n_h\] \[v\] {\bf{h}}^{-\nicefrac{2}{p'(x)}} dS 
	&+\int_{\partial\O}  (u^n_h-u_D) v {\bf{h}}^{-\nicefrac{2}{p'(x)}}dS= 0,
	\end{aligned}
	\label{M1}
\end{equation}
\begin{align}
	\label{M2} \int_{\O}|\eta^n_h|^{p(x)-2} \eta^n_h \Phi\,dx& 
	+r\int_{\O}(\eta^n_h-Bu^n_h)\Phi \, dx
	=\int_{\O} \lambda^n_h \Phi\, dx,\\
	\label{M3}\lambda^{n+1}_h&=\lambda^n_h +r(Bu^n_h- \eta^n_h),
 \end{align}
for all $v\in S^k(\th)$ and for all $\Phi\in S^l(\th)\times S^l(\th).$

\begin{remark}
Since $V,H,F,G,B,\rho_n$ and $r$  satisfy the assumptions of  Theorem \ref{conalg2},  
then the conclusions of Theorem \ref{conalg2} hold, that is,
$u^n_h \to u_h$ and $\nabla u^n_h \to \nabla u_h,$ as $n\to+\infty$.
\end{remark}

Observe that \eqref{M1} can be replace by,
$$MU^n=F^n,$$
where 
\begin{align*}
	M_{ij}&=\int_{\O} \varphi_i \varphi_j\, dx +r\int_{\O} B\varphi_i B\varphi_j\, dx+
	\int_{\Gamma_{int}} \[\varphi_i\] \[\varphi_i\] {\bf{h}}^{-\nicefrac{2}{p^\prime(x)}}\, dS+
	\int_{\partial\O}  \varphi_i \varphi_j {\bf{h}}^{-\nicefrac{2}{p^\prime(x)}}\, dS,\\
	F^n_j&=\int_{\O}\varphi_j \xi \, dx+\int_{\partial\O} \varphi_j u_D {\bf{h}}^{-\nicefrac{2}{p^\prime(x)}}\, dS+ 
	\int_{\O} (r \eta^{n-1}_h-\lambda^n_h) B\varphi_j\, dx,
\end{align*}
and $\{\varphi_j\}_{j\leq m}$ is a basis of $V$ with $m=\mbox{dim}(V)$. Thus
$$
	u^n_h=\sum_{j=1}^m U^{n}_j	 \varphi_j.
$$

On the other hand, we  define
$\eta_{n,{\kappa}}=\eta^n_h|_{\kappa}$, in the same way we define $\lambda_{n,{\kappa}}$ and $B_{\kappa}u^n_h$. 
We can see from  \eqref{M2} that $\eta_{n,{\kappa}}$  satisfies
$$
	\left(\dfrac{1}{|\kappa|}\int_{\kappa}|\eta_{n,{\kappa}}|^{p(x)-2}\,dx +r\right)\eta_{n,{\kappa}}
	=\lambda_{n,{\kappa}}+ B_\kappa u^n_h.
$$

Let $\bar{p}_{\kappa}=p(\bar{x}_{\kappa})$, where $\bar{x}_{\kappa}$  is the varicenter of $\kappa$. Then using a 
quadrature rule for the first term, we can approximate $\eta_{n,{\kappa}}$ by the equation,
$$
(|\eta_{n,{\kappa}}|^{\bar{p}_{\kappa}-2}+r)\eta_{n,{\kappa}}=\lambda_{n,{\kappa}}+ B_\kappa u^n_h, 
$$
thus $|\eta_{n,{\kappa}}|$ solves
$$
(|\eta_{n,{\kappa}}|^{\bar{p}_{\kappa}-2}+r)|\eta_{n,{\kappa}}|=|\lambda_{n,{\kappa}}+B_{\kappa}u^n_h|,
$$
and therefore
$$
\eta_{n,{\kappa}}=\frac{\lambda_{n,{\kappa}}+B_{\kappa}u^n_h}{|\eta_{n,{\kappa}}|^{\bar{p}_{\kappa}-2}+r}.
$$

Summarizing, each iteration of the algorithm  can be reduced to the 
following:

\medskip

Find $(u^{n}_h,\eta^{n}_h,\lambda^{n+1}_h)\in V\times H \times H$ such that
\begin{align*}
	u^n_h&=\sum_{j=1}^m U^n_{j} \varphi_j,\\
	\eta_{n,{\kappa}}&=\frac{\lambda_{n,{\kappa}}
	+B_{\kappa}u^n_h}{x^{\bar{p}_{\kappa}-2}+r},\\
	\lambda^{n+1}_h&=\lambda^n_h +r(B u^n_h- \eta^n_h).
\end{align*}
where $U^n$ solves,
\begin{equation}\label{lineal}
	MU^n=F^n
\end{equation}
and $x\in\R_{\geq 0}$ solves  
\begin{equation}\label{nolineal}
	x^{\bar{p}_{\kappa}-1}+rx=|\lambda_{n,{\kappa}}+B_{\kappa}u^n_h|,
\end{equation}

Observe that each step of the algorithm consists in solving the linear equation \eqref{lineal} and then the one dimensional nonlinear equation \eqref{nolineal}.

\medskip

We now apply the algorithm to a family of examples. For each $h$, we approximate $u_h$ by $u^n_h$, 
and finally  we compute $\|u^n_h-u\|_{L^{2}(\Omega)}$.

\medskip

Motivate by \cite{SW}, where the authors analyse a $P_0$ discontinuous Galerkin formulation for image denosing, we test this algorithm in the following example; we have considered a rectangular domain 
$\Omega=[-1\ 1]\times [-1\ 1]$ and a uniform rectangular mesh, with  
constant finite elements in all the rectangles.
We  denote by $m$ the number 
of degrees of freedom in the finite element approximation.
We take $r=1.$ 

%\begin{figure}[h]
% \includegraphics[width=1\textwidth]{mesh.eps}
 % \caption{Mesh with $N=100$}
 % \label{fig:figura0}
%\end{figure}

We take the following   function $p(x)$, 

$$
	p(x)=
	\begin{cases} 
		1+\left({\dfrac{b}{2}(x_1+x_2)+1+b}\right)^{-1} 
		&\mbox{ if } b\neq 0,\\
		2 &\mbox{ if } b =0.
	\end{cases}
$$

Observe that $p_2=2$ and $p_1=1+\nicefrac{1}{1+2b}$ , then $p_1$ is close to one when $b>>0$.

%\begin{figure}[h]
 %\includegraphics[width=1\textwidth]{agunasp2.eps}
%  \caption{Function $p(x)$ for difrent values of $b$}
%  \label{fig:figura4}
%\end{figure}

It is easy to see that the solution of \eqref{problema} is
$$
	u(x)=
	\begin{cases} 
		\dfrac{\sqrt{2}e^{b+1}}b
		\left(e^{\frac{b}{2}(x_1+x_2)}-1\right) 
		&\mbox{ if } b \neq 0,\\[.5cm] 
		\dfrac{\sqrt{2}e}{2}  (x_1+x_2) &\mbox{ if } b =0.
	\end{cases}
$$
with $\xi=u.$ 

%\begin{figure}[h]
 %\includegraphics[width=1\textwidth]{funcionp.eps}
 % \caption{Function $p(x)$ for $b=3$}
 % \label{fig:figura2}
%\end{figure}
%\begin{figure}[h]
% \includegraphics[width=1\textwidth]{variasu.eps}
 % \caption{Solution for different values of $b$}
 % \label{fig:figura3}
%\end{figure}
 
The experimental results for different values of $b$ and $m$ are shown in the following table. 

\begin{center}
\begin{tabular}{ | c | c | c | c | c | c | c | }
	\hline   
   b & \multicolumn{2}{|c|}{0} & \multicolumn{2}{|c|}{0.25} & \multicolumn{2}{|c|}{0.5} \\
   \hline
   m & $L^2-$Error & Iter. & $L^2-$Error & Iter. & $L^2-$Error & Iter. \\
   \hline
   \hline
   100 & 0.5921  & 3 & 0.7519 & 46 & 0.9214 &  50\\
   \hline
   196 & 0.4603  & 3 & 0.5932 & 47 & 0.7313 &  52\\
   \hline   
   484 & 0.3185  & 3 & 0.4220 & 48 & 0.5271 &  56\\
   \hline   
   961 & 0.2366  & 3 & 0.3228 & 49 & 0.4087 &  59\\
   \hline
   2916 & 0.1430 & 3 & 0.2101 & 50 & 0.2744 &  63\\
\hline
 \end{tabular}
 \end{center}
 
 \medskip

 Where Iter. is the number of iterations required
 in the algorithm in order to satisfy our stopping 
 criteria. Observe that as $b$ grows, the number of iterations increases and the rate of convergence decreases.

\bigskip 
\begin{center}
 \begin{tikzpicture}[font= \tiny,scale=1.2]
    \begin{loglogaxis}[
        xlabel=\textsc{Total DOF},           
        ylabel={$L^2-$ Error}
        ]
      
      \addplot [mark=*, color=black] plot coordinates {
        (100,  5.921e-01 )
        (196,  4.603e-01 )
        (484,  3.185e-01 )
        (961,  2.366e-01 )
        (2916, 1.430e-01 )

    };

    \addplot[mark=triangle*, color=black] plot coordinates {
        (100, 0.7519 )
        (196, 0.5932 )
        (484, 0.4220 )
        (961, 0.3228 )
        (2916,0.2101 )
    };

    \addplot [mark=square*, color=black] plot coordinates {
        (100, 0.9214 )
        (196, 0.7313 )
        (484, 0.5271 )
        (961, 0.4087 )
        (2916,0.2744 )
    };
    \legend{$b=0$\\$b=0.25$\\$b=0.5$\\}
    \end{loglogaxis};
\end{tikzpicture}
\end{center}

\bibliographystyle{amsplain}
\bibliography{las}
\end{document}